\let\ORIlabel\label
\let\ORIrefstepcounter\refstepcounter
\AddToHook{package/hyperref/before}{
   \let\label\ORIlabel 
   \let\refstepcounter\ORIrefstepcounter}
\documentclass[onefignum,onetabnum]{siamart220329}

\usepackage{amssymb,amsfonts, amsmath} % 
\usepackage{bm}
\usepackage{todonotes}
\usepackage{mathtools, relsize,geometry}
\usepackage{mathrsfs}
\usepackage{multirow}
\newsiamremark{remark}{Remark}

\makeatletter

\renewcommand{\vec}[1]{%
	\ifcat\relax\noexpand#1%
	\ensuremath{\boldsymbol{\lowercase{#1}}}%
	\else
	\ensuremath{\mathbf{\lowercase{#1}}}%
	\fi
}
\newcommand{\bF}{\bm F}
\newcommand{\bG}{\bm G}

\newtheorem{assumption}[theorem]{Assumption}

\usepackage{pgfplots}
\usepackage{graphicx,epstopdf} 
\usepackage{subfig}
\usepackage{tikz}
\usepackage{algorithm}
\usepackage{algpseudocode}
\usepackage{algorithmicx}

\usetikzlibrary{shapes.geometric}

\definecolor{matlabred}{rgb}{0.9047,    0.1918,    0.1988}
\definecolor{matlabblue}{rgb}{0.2941    0.5447    0.7494}
\definecolor{matlabgreen}{rgb}{	0.3718    0.7176    0.3612}
\definecolor{matlaborange}{rgb}{1.0000    0.5482    0.1000}

\usepackage{algpseudocode}
\algnewcommand{\LineComment}[1]{\Statex \(\%\) \small \textit{#1} \(\%\)}

\Crefname{ALC@unique}{Line}{Lines}
\usepackage{algorithm}
\DeclareMathOperator{\argmin}{\rm arg\,min}

\title{Residual Recombination Methods as Anderson-like Acceleration:
An Algebraic Interpretation of BoostConv\thanks{Version of \today}}

\author{Vincenzo Citro\thanks{DIIN, University of Salerno, Via Giovanni Paolo II, 84084 Fisciano (SA), Italy,
{\tt vcitro@unisa.it}} \and Davide Palitta\thanks{Dipartimento di Matematica and (AM)$^2$,
Alma Mater Studiorum Universit\`a di Bologna,
Piazza di Porta San Donato  5, I-40127 Bologna, Italy,
{\tt davide.palitta@unibo.it}} }

\begin{document}
\maketitle

\renewcommand{\thefootnote}{\fnsymbol{footnote}}
\maketitle \pagestyle{myheadings} \thispagestyle{plain}
\markboth{ V.\ CITRO, D.\ PALITTA}{BOOSTCONV AS ANDERSON-LIKE ACCELERATION}

%% ------------------------------------------------------------------
%% ABSTRACT
%% ------------------------------------------------------------------

\begin{abstract}
{\tt BoostConv} has been introduced in earlier works as an effective acceleration technique for nonlinear iterative processes and has been successfully employed in a variety of applications to enhance convergence rates or to compute unstable fixed points that are otherwise inaccessible through standard approaches. Despite its demonstrated practical effectiveness, the theoretical properties of the method have not yet been fully characterized.

In this work, we present a more robust formulation of the {\tt BoostConv} algorithm and, for the first time, provide a rigorous proof of its convergence. The proposed analysis places {\tt BoostConv} within a precise mathematical framework, clarifying its interpretation as a nonlinear convergence accelerator and establishing sufficient conditions under which convergence to a fixed point is guaranteed.

The theoretical findings are illustrated through several numerical examples, spanning from a linear problem to a low-dimensional benchmark and a large-scale incompressible Navier–Stokes simulation. These results demonstrate the robustness and practical relevance of the proposed method and bridge the gap between empirical performance and rigorous analysis, paving the way for further developments and applications to complex nonlinear problems.
\end{abstract}

\begin{keywords}
Large-scale nonlinear systems, steady solutions, iterative procedures, stabilization algorithm, Anderson acceleration
\end{keywords}

\begin{MSCcodes}
90C53, 65B99
\end{MSCcodes}

%%%%%%%%%%%%%%%%%%%%%%%%%%%%%%%%%%%%%%%%%%%%%%%%%%

\section{Introduction}

The numerical solution of large-scale nonlinear systems
\begin{equation}
\label{eq:nonlinear_system}
\bF(x)=0, \qquad \bF:\mathbb{R}^n \to \mathbb{R}^n,
\end{equation}
is a central problem in scientific computing, arising in a wide range of applications
including computational fluid dynamics, nonlinear partial differential equations,
optimization, and inverse problems, to name a few. When the dimension $n$ of the problem at hand is large, the efficiency
and robustness of the nonlinear solver play a decisive role in the overall feasibility
of the simulation.

A classical approach to \eqref{eq:nonlinear_system} consists in applying Newton's method,
which at iteration $k$ requires the solution of the linearized system
\begin{equation}
\label{eq:newton_linearized}
\frac{\partial \bF}{\partial x}(x_k)\,(x_{k+1}-x_k) = -\bF(x_k), \,\, k \ge 0,
\end{equation}
where $x_k$ is the current approximate solution that follows a given initial guess $x_0$.
While this strategy enjoys quadratic local convergence, its practical applicability is
often limited by the cost of assembling, storing, and inverting the Jacobian matrix,
especially in large-scale or matrix-free settings \cite{dennis_schnabel,nocedal_wright}.
For this reason, a vast literature has been devoted to the development of approximate
Newton methods in which the action of the inverse Jacobian is replaced by a cheaper
operator.

A common abstraction of these approaches is obtained by rewriting
\eqref{eq:newton_linearized} in the iterative form
\begin{equation}
\label{eq:basic_iteration}
x_{k+1} = x_k + B r_k,
\qquad r_k = -\bF(x_k),
\end{equation}
where $B$ is a linear operator approximating $(\partial \bF/\partial x)^{-1}$.
Depending on the choice of $B$, \eqref{eq:basic_iteration} may represent a fixed-point
iteration, a quasi-Newton method, or a single step of a Jacobian-free Newton--Krylov
procedure \cite{kelley_book}. The convergence properties of the method
are therefore strongly influenced by the quality of the approximation $B$ and by the
spectral features of the underlying problem.

In many applications, the iteration \eqref{eq:basic_iteration} converges only slowly
or may even fail to converge due to the presence of weakly stable or unstable modes.
This phenomenon is particularly pronounced in nonlinear dynamical systems arising
from the discretization of partial differential equations, where the Jacobian is
often non-normal and characterized by a small number of dominant modes
\cite{saad_iterative,saad_szyld}. In such situations, significant gains can be achieved
by augmenting the basic iteration with acceleration techniques that exploit information
from previous residuals.

Residual recombination and multisecant strategies provide a powerful framework in this
direction. Classical quasi-Newton methods, beginning with Broyden's updates
\cite{broyden1965}, construct low-rank corrections to the operator $B$ so as to enforce
secant conditions derived from past iterates. Closely related ideas underlie Anderson
acceleration and mixing methods, which have recently received renewed attention and
for which rigorous convergence results are now available
\cite{anderson1965,EvansEtAl2020,TothKelley2015,WalkerNi2011}.

Within this landscape, the {\tt BoostConv} algorithm introduced in \cite{CITRO2017234}
stands out as an effective and remarkably simple residual recombination technique.
{\tt BoostConv} modifies the iteration \eqref{eq:basic_iteration} by replacing the current
residual $r_k$ with a linear combination of past residual differences, obtained by
solving a small least-squares problem. The resulting method can be interpreted as a
stabilization and acceleration mechanism that selectively targets the slowly converging
components of the iteration, while leaving the original solver essentially unchanged.
A distinctive feature of {\tt BoostConv} is that it can be implemented as a black-box wrapper
around an existing solver, requiring only minimal modifications to the original code.

Originally proposed and validated in the context of computational fluid dynamics, {\tt BoostConv} has been shown to effectively accelerate convergence and to enable the computation of unstable steady states in large-scale simulations \cite{citroPOFBoostconv}. {\color{black}While its practical effectiveness has been clearly demonstrated, a rigorous theoretical characterization of its convergence properties is still lacking.}

The aim of this paper is to address these aspects by presenting a more robust formulation of the {\tt BoostConv} algorithm and by providing, for the first time, a rigorous proof of convergence under suitable assumptions. By framing {\tt BoostConv} within the theoretical setting of Anderson acceleration, we analyze its convergence properties and illustrate the theoretical findings through some representative numerical examples. The results confirm that {\tt BoostConv} constitutes a robust and efficient strategy for accelerating nonlinear iterations of the form \eqref{eq:basic_iteration}, while incurring only minimal additional computational cost.

The remainder of the paper is organized as follows. In section~\ref{Background material} we report some background material needed to set the stage for our derivations. In particular, Anderson acceleration is illustrated in section~\ref{Anderson acceleration}
whereas in section~\ref{sec:boostconv} we
briefly recall the {\tt BoostConv} algorithm and its implementation. Section~\ref{sec:broyden} sees the main contribution of this paper. We start by
discussing the interpretation of {\tt BoostConv} in terms of Anderson acceleration. This will allow us to show the convergence properties of {\tt BoostConv}, provided a certain matrix computed by this scheme is full rank. Therefore, in section~\ref{Stabilized} we design a variant of {\tt BoostConv} that guarantees such property. The {\tt BoostConv} convergence results are then stated in section~\ref{Convergence properties}. In section~\ref{Numerical Results} we report a panel of different results to numerically validate our findings. The paper ends with section~\ref{Conclusions} where some conclusions are drawn.

%Given a generic nonlinear operator $\bF:\mathbb{R}^n\rightarrow\mathbb{R}^n$ we are interested in solving the problem
%
%\begin{equation}\label{eq:main}
%    \bF(x)=0.
%\end{equation}
%By linearizing around a current approximate solution $x_k$, $k\geq 0$, of~\eqref{eq:main}, for a given initial guess $x_0$, we get the linear system
%%
%\begin{equation}\label{eq:linearization}
%    \frac{\partial\bF}{\partial x}(x_{k+1}-x_k)=-\bF(x_k).
%\end{equation}
%Since the exact inversion of the Jacobian $\partial\bF/\partial x$ is seldom an option, we %rewrite~\eqref{eq:linearization} as 
%%
%\begin{equation}\label{eq:iteration1}
%        x_{k+1}=x_k+Br_k,
%\end{equation}
%%
%where $r_k=-\bF(x_k)$ and $B$ is a matrix such that $B\approx (\partial\bF/\partial x)^{-1}$\footnote{Notice that if $B= (\partial\bF/\partial x)^{-1}$ then~\eqref{eq:iteration1} amounts to the Newton method.}.

%%%%%%%%%%%%%%%%%%%%%%%%%%%%%%%%%%%%%%%%%%%%%%%%%%
\section{Background material}\label{Background material}
In this section we revise some preliminary material which serves as foundation for our novel derivations that will be presented in section~\ref{sec:broyden}.

%%%%%%%%%%%%%%%%%%%%%%%%%%%%%%%%%%%%%%%%%%%%%%%%%
\subsection{Anderson acceleration}\label{Anderson acceleration}
Consider the nonlinear problem~\eqref{eq:nonlinear_system} and assume that a certain number $k$ of steps of one's favorite iterative solver, e.g.,~\eqref{eq:basic_iteration}, has been performed. Anderson acceleration (or Anderson mixing) aims at determining a good estimate $x_{k+1}$ that approximates the solution to~\eqref{eq:nonlinear_system} without additional evaluations of $\bF$ but solely relying on the information that has been already constructed, namely $x_{k-j}$ and $\bF(x_{k-j})$ for $j=0,\ldots,m$, $0<m\le k$. 
% giusto?

By defining $\Delta X_k=[x_{k-m+1}-x_{k-m},\ldots,x_{k}-x_{k-1}]$ and $\Delta F_k=[\bF(x_{k-m+1})-\bF(x_{k-m}),\ldots,\bF(x_{k})-\bF(x_{k-1})]$, Anderson acceleration computes $x_{k+1}$ as
$$
x_{k+1}=x_k+\beta \bF(x_k)-(\Delta X_k+\beta \Delta F_k)\gamma_k,
$$
where $\beta\in\mathbb{R}$ is called \emph{mixing parameter}\footnote{In its full generality, Anderson acceleration sees the employment of a mixing parameter that changes at each iteration so that using a $\beta_k$, in place of $\beta$, would be more appropriate. However, a constant $\beta_k\equiv\beta$ is often adopted in practice.} and $\gamma_k\in\mathbb{R}^m$ is such that 
$$
\gamma_k=\argmin_{\gamma\in\mathbb{R}^m}\|\bF(x_k)-\Delta F_k\gamma\|_2^2;
$$
see, e.g.,~\cite{FangSaas2009}.
Assuming $\Delta F_k$ to be full rank, we can thus rewrite Anderson acceleration as
\begin{align}\label{eq:anderson_it}
x_{k+1}=&\,x_k+\beta \bF(x_k)-(\Delta X_k+\beta \Delta F_k)\gamma_k=x_k+\beta \bF(x_k)-(\Delta X_k+\beta \Delta F_k)\Delta F_k^\dagger \bF(x_k)\notag\\
=&\,x_k+\beta \bF(x_k)-(\Delta X_k+\beta \Delta F_k)(\Delta F_k^T\Delta F_k)^{-1}\Delta F_k^T \bF(x_k).
\end{align}

Many papers available in the literature have further studied Anderson acceleration~\cite{WalkerNi2011}, its convergence~\cite{TothKelley2015,EvansEtAl2020}, and more advanced variants like a stabilized and/or periodic use of Anderson acceleration~\cite{brezinski_anderson}. Periodic Anderson acceleration employs Anderson acceleration every other $p>0$ ``plain" iterations of the form~\eqref{eq:basic_iteration}. Stabilized Anderson acceleration detects a possible numerical linear dependency among the columns of $\Delta F_k$ and retain only the most significant directions encoded in this matrix. In~\cite[Algorithm 6]{brezinski_anderson} this is done by employing a Gram-Schmidt approach where a column of $\Delta F_k$ is discarded if its orthogonal projection onto the subspace orthogonal to the one spanned by the other columns has a too small norm. If $\mathcal{I}_k$ denotes the index set related to columns of $\Delta F_k$ that are sufficiently linearly independent, and $\Delta F_k^{\mathcal{I}_k}$ denotes the submatrix of $\Delta F_k$ containing those columns, then the stabilized Anderson acceleration reads as follows
$$
x_{k+1}=x_k+\beta \bF(x_k)-(\Delta X_k^{\mathcal{I}_k}+\beta \Delta F_k^{\mathcal{I}_k})((\Delta F_k^{\mathcal{I}_k})^T\Delta F_k^{\mathcal{I}_k})^{-1}(\Delta F_k^{\mathcal{I}_k})^T \bF(x_k),
$$
where $\Delta X_k^{\mathcal{I}_k}$ contains the columns of $\Delta X_k$ whose index belongs to $\mathcal{I}_k$.

%%%%%%%%%%%%%%%%%%%%%%%%%%%%%%%%%%%%%%%%%%%%%%%%%%
\subsection{BoostConv}\label{sec:boostconv}
The main goal of {\tt BoostConv}~\cite{CITRO2017234} is to improve the convergence of \eqref{eq:basic_iteration}. This is achieved by replacing~\eqref{eq:basic_iteration} with 
\begin{equation}\label{eq:iteration2}
        x_{k+1}=x_k+B\xi_k,
\end{equation}
where $\xi_k=\xi_k(r_k)$ is a suitable function of the current residual. Given a parameter $N>0$, we define two matrices $W_k,V_k\in\mathbb{R}^{n\times (\widetilde N+1)}$, $\widetilde N:=\min\{N-1,k\}$, such that  

%%% questo suitable nasconde tutta la mia ignoranza in merito alle proprieta' di questa funzione :) magari si puo' dire qualcosa in merito

\begin{equation}\label{eq:def_W_V_0}
W_0=V_0=r_0,
\end{equation}
and, for $k>0$,
\begin{equation}\label{eq:def_W_V}
    W_k=\left\{\begin{array}{l}
    \xi_{k-1}+r_{k}-r_{k-1},\quad\text{if }N=1\text{ or } k=1,\\
    
        [\widetilde W_{k-1}, \xi_{k-1}+r_{k}-r_{k-1}], \quad \text{otherwise,} 
    \end{array}\right. \quad
V_k=\left\{\begin{array}{l}
    r_{k-1}-r_{k},\quad\text{if }N=1\text{ or } k=1,\\
    
        [\widetilde V_{k-1}, r_{k-1}-r_{k}], \quad \text{otherwise,} 
    \end{array}\right.
\end{equation}
In~\eqref{eq:def_W_V}, $\widetilde W_{k-1}$ and $\widetilde V_{k-1}$ are matrices with $\widetilde N$ columns\footnote{Notice that $\widetilde N\geq 1$ for $N>1$.} defined as follows
$$\widetilde W_{k-1}=\left\{\begin{array}{l}
     W_{k-1},\quad \text{if } 1<k<N, \\
     
     [W_{k-1}e_2,\ldots,W_{k-1}e_N], 
     \quad \text{otherwise,} 
     \end{array}\right.\quad \widetilde V_{k-1}=\left\{\begin{array}{l}
     V_{k-1},\quad \text{if } 1<k<N, \\
     
     [V_{k-1}e_2,\ldots,V_{k-1}e_N], 
     \quad \text{otherwise,} 
     \end{array}\right.
$$
which means that, at iteration $k>N$, we retain only the last $N-1$ columns of $W_{k-1}$ and $V_{k-1}$ and employ the newly available data to define the $N$th column of $W_k$ and $V_k$. 
\begin{remark}
It is important to realize that at iteration $k$, once $x_{k+1}$ is computed, the vectors $r_k$ and $\xi_k$ cannot be discarded as they are both needed to define the new column of $V_{k+1}$ and $W_{k+1}$, at the next iteration.
\end{remark}

With the matrices $W_k$ and $V_k$ at hand, we compute the vector $\xi_k$ in~\eqref{eq:iteration2} as 
\begin{equation}\label{eq:xi_def}
    \xi_k=r_k+W_kc_k,
\end{equation}
where $c_k\in\mathbb{R}^{\widetilde N+1}$ is such that 
\begin{equation}\label{eq:selection_ck}
    c_k=\argmin_{c\in\mathbb{R}^{\widetilde N+1}}\|r_k-V_kc\|_2^2.
\end{equation}
Notice that the update~\eqref{eq:iteration2} does not need to be applied at every iteration of the underlying scheme~\eqref{eq:basic_iteration}.
In practice, one may instead activate {\tt BoostConv} every $p>0$ iterations, depending on the rate at which 
the local Jacobian of the nonlinear problem varies along the iteration. When the Jacobian changes slowly, 
sampling the residual recombination step less frequently may be sufficient and computationally convenient; 
conversely, when significant variations are observed, applying~\eqref{eq:iteration2} at each iteration can provide 
improved robustness and faster convergence. This choice does not alter the structure of the method, and the 
matrices $W_k$ and $V_k$ are defined as before, being updated only at the selected \textit{active} iterations, 
namely for $k$ such that $\mathrm{mod}(k,p)=0$.

%% forse conviene mettere un commento del perche' potrebbe essere necessario fare una call ogni tot iterazioni

%%%%%%%%%%%%%%%%%%%%%%%%%%%%%%%%%%%%%%%%%%%%
\section{{\tt BoostConv} as Anderson-like acceleration}\label{sec:broyden}
For the sake of simplicity, we assume that {\tt BoostConv} is applied at each iteration $k>0$ with a window $N\geq1$, namely, equation~\eqref{eq:basic_iteration} is replaced by~\eqref{eq:iteration2} at each $k$. %We then generalize our analysis to the case where {\tt BoostConv} is adopted only every other $p$ iterations.

Thanks to the selection of $c_k$ in~\eqref{eq:selection_ck}, the definition of $\xi_k$ in~\eqref{eq:xi_def} can be recast as 
$$\xi_k=r_k+W_kV_k^\dagger r_k,$$
where $V_k^\dagger$ denotes the Moore-Penrose pseudo-inverse of $V_k$, so that~\eqref{eq:iteration2} becomes
\begin{equation}\label{eq:fixedpoint_boostconv1}
    x_{k+1}=x_k+B(I+W_kV_k^\dagger )r_k=x_k-B(I+W_kV_k^\dagger )\bF(x_k).
\end{equation}
Now, we are going to have a closer look at the matrices $W_k$ and $V_k$.  If $k> N$, so that $\widetilde N=N-1$ and $W_k$ and $V_k$ have both $N$ columns, 
by noticing that $\xi_k-r_k=W_kV_k^\dagger r_k$ for any $k>N$,
we have\footnote{If $k\leq N$, $W_k$ and $V_k$ have only $\widetilde N+1$ columns. Nevertheless, for $k\geq 1$, all these columns have the form depicted in~\eqref{eq:def2W}--\eqref{eq:def2V}.}
\begin{align}\label{eq:def2W}
W_k=&\,[\xi_{k-N}-r_{k-N}+r_{k-N+1},\ldots,\xi_{k-1}-r_{k-1}+r_{k}]\notag\\
=&\, [W_{k-N}V_{k-N}^\dagger r_{k-N}+r_{k-N+1}, \ldots, W_{k-1}V_{k-1}^\dagger r_{k-1}+r_{k}]\notag\\
=&-[W_{k-N}V_{k-N}^\dagger \bF(x_{k-N})+\bF(x_{k-N+1}), \ldots, W_{k-1}V_{k-1}^\dagger \bF(x_{k-1})+\bF(x_{k})].
\end{align}
and
\begin{equation}\label{eq:def2V}
    V_k=[r_{k-N}-r_{k-N+1}, \ldots, r_{k-1}-r_{k}]=[\bF(x_{k-N+1})-\bF(x_{k-N}), \ldots, \bF(x_{k})-\bF(x_{k-1})].
\end{equation}
By relying on~\eqref{eq:fixedpoint_boostconv1}, a direct computation shows that
$$-B W_kV_k^\dagger \bF(x_k)=x_{k+1}-x_k+B \bF(x_k).$$
This means that we can rewrite $W_k$ as
\begin{align*}
BW_k=&\, [x_{k-N+1}-x_{k-N},\ldots,x_k-x_{k-1}]-B[\bF(x_{k-N+1})-\bF(x_{k-N}),\ldots, \bF(x_{k-1})-\bF(x_{k})]\\
=&\,  [x_{k-N+1}-x_{k-N},\ldots,x_k-x_{k-1}]-BV_k.
\end{align*}
By denoting $\Delta X_k:=[x_{k-N+1}-x_{k-N},\ldots,x_k-x_{k-1}]$ and $\Delta F_k=V_k$, we can recast~\eqref{eq:fixedpoint_boostconv1} as
\begin{equation}\label{eq:fixedpoint_boostconv2}
 x_{k+1}=x_k-B \bF(x_k)- (\Delta X_k-B \Delta F_k)\Delta F_k^\dagger \bF(x_k).
\end{equation}
The recursion in~\eqref{eq:fixedpoint_boostconv2}
can be seen as the update provided by the generalized Broyden's method of the second kind; cf., e.g.,~\cite[Equation (17)]{FangSaas2009} for $G_{k-m}=B$. Equation~\eqref{eq:fixedpoint_boostconv2} shows that, at each iteration $k$, {\tt BoostConv} employs an approximation to the inverse of the Jacobian $\partial \bF/\partial x$ given by 
$B+(\Delta X_k-B \Delta F_k)\Delta F_k^\dagger$ in place of the only $B$. The low-rank update $(\Delta X_k-B \Delta F_k)\Delta F_k^\dagger$ is crucial for the success of the overall procedure. In particular, it can be shown that, for a full-rank $\Delta F_k$, $B+(\Delta X_k-B \Delta F_k)\Delta F_k^\dagger$ is the only matrix that satisfies both the (multi)secant condition 
$$G\Delta F_k=\Delta X_k,$$
(in the unknown $G$)
and the so-called \emph{no-change} condition
$$(G-B)q=0, \quad \text{for all }q\perp\text{Range}(\Delta F_k).$$
Moreover,
$$B+(\Delta X_k-B \Delta F_k)\Delta F_k^\dagger=\argmin_{G\Delta F_k=\Delta X_k}\|G-B\|_F,$$
and it is worth noticing that,
if $\Delta F_k$ is squared and full rank, then~\eqref{eq:fixedpoint_boostconv2} reads as
$$x_{k+1}=x_k-\Delta X_k\Delta F_k^{-1} \bF(x_k), $$
for any $B$, that is {\tt BoostConv} is indeed a (multi)secant method.

\begin{remark}
Even though~\eqref{eq:iteration2} and~\eqref{eq:fixedpoint_boostconv2} are mathematically equivalent, it is interesting to notice that the former leaves the ``iteration'' matrix $B$ untouched but it applies it to a different vector: $\xi_k$ in place of $r_k$. The formulation~\eqref{eq:fixedpoint_boostconv2}, instead, keeps $\bF(x_k)$ untouched but modifies the iteration matrix that now is of the form $B+(\Delta X_k-B \Delta F_k)\Delta F_k^\dagger$ in place of $B$. The less intrusive nature of the {\tt BoostConv} formulation~\eqref{eq:iteration2} is very advantageous when this strategy needs to be merged in pre-existing code, in a black-box manner. Indeed, once $r_k$ is available, one just computes $\xi_k$ and runs~\eqref{eq:iteration2} instead of~\eqref{eq:basic_iteration}. On the other hand, classic implementations of the Broyden method see an actual update to the current approximation to the inverse Jacobian of the form 
$B_{k+1}=B_k+(\Delta X_k-B_k \Delta F_k)\Delta F_k^\dagger$. This difference will not allow for the convergence properties of the Broyden method to straightforwardly carry over to {\tt BoostConv} as well.
\end{remark}

Clearly, if $B=-\beta I$, $\beta>0$ in~\eqref{eq:iteration2}, {\tt BoostConv} can be interpreted in terms of
Anderson acceleration; cf., e.g.,~\cite[Equation (24)]{FangSaas2009} and equation~\eqref{eq:anderson_it}.
 For this specific choice of $B$, we can easily adapt the convergence properties of Anderson acceleration to the {\tt BoostConv}
implementation.
For a more generic $B$, we can rewrite the basic {\tt BoostConv} iteration~\eqref{eq:iteration2} as 
$$x_{k+1}=x_k+\beta(1/\beta B)r_k=x_k+\beta \widetilde r_k,\quad \widetilde r_k=-1/\beta B\bF(x_k),$$
namely we are applying a linear update scheme to the \emph{preconditioned} problem $1/\beta B\bF(x)=0$. Thanks to this reformulation, we could be able to show that {\tt BoostConv} inherits the convergence properties of Anderson acceleration also for a $B$ that is not a multiple of the identity. However, this would introduce an artificial parameter $\beta$ whose role would be unclear. We thus decide to pursue a different path by interpreting $B$ as a \emph{multi-mixing} parameter and generalize some of the convergence results of Anderson acceleration ($B=-\beta I$) to this more generic setting; see section~\ref{Convergence properties}. 

%%%%%%%%%%%%%%%%%%%%%%%%%%%%%%%%%%%%%%%
\subsection{Robust {\tt BoostConv}}\label{Stabilized}
To show convergence results for {\tt BoostConv} we will exploit the relation between the latter and Anderson acceleration. In particular, thanks to their mild assumptions, we would like to adapt the results in~\cite[Section 5]{brezinski_anderson} to the case of {\tt BoostConv}. To this end, we first need to design a \emph{robust} version of {\tt BoostConv}, and this is the goal of this section. Indeed,
in~\cite[Theorem 5.8]{brezinski_anderson} the authors show linear convergence of stabilized Anderson acceleration and not of the plain, regular scheme.

As mentioned in section~\ref{Anderson acceleration}, stabilized Anderson acceleration detects possible linear dependency 
among the columns of $\Delta F_k$ by employing a Gram-Schmidt approach. For {\tt BoostConv}, this translates into the same steps but for the matrix $V_k$ in~\eqref{eq:def_W_V}. The Gram-Schmidt approach in~\cite[Algorithm 6]{brezinski_anderson} can be seen as an updating of the skinny QR factorization of $V_k$, $V_k=Q_kR_k$, from one iteration to the next one, equipped with a check on the possible linear dependency of the newly acquired columns. As a byproduct, this update provides us also with a cheap procedure for solving the least squares problem~\eqref{eq:selection_ck}.

We define the matrices $Q_k\in\mathbb{R}^{n\times \widehat N}$ and $R_k\in\mathbb{R}^{\widehat N\times \widehat N}$, $\widehat N\leq N$, as follows. While nothing happens for $k=0$, starting from $k=1$, we have
$$Q_1=(r_0-r_1)/\|r_0-r_1\|,\quad R_1=\|r_0-r_1\|,$$
so that $c_1=R_1^{-1}(Q_1^Tr_1)$ in~\eqref{eq:selection_ck}.

For $k>1$, if $\widehat N< N$, we just need the new column $r_{k-1}-r_k$ of $V_k=[V_{k-1},r_{k-1}-r_k]$ to get orthogonalized with respect to the columns of $Q_{k-1}$. In particular, we compute
$$\widetilde q_k=(I-Q_{k-1}Q_{k-1}^T)(r_{k-1}-r_k).$$
If $\|\widetilde q_k\|<\tau\|r_{k-1}-r_k\|$ for a prescribed threshold $\tau>0$, then $r_{k-1}-r_k$ get discarded and we define $V_k=V_{k-1}$, $W_k=W_{k-1}$, $Q_k=Q_{k-1}$, $R_k=R_{k-1}$. Otherwise, we set $q_k=\widetilde q_k/\|\widetilde q_k\|$ and 
$$Q_k=[Q_{k-1},q_k],\qquad R_k=\begin{bmatrix}
R_{k-1} & Q_{k-1}^T(r_{k-1}-r_k)\\
 & \|\widetilde q_k\|\\
\end{bmatrix}.
$$
In either case, we can compute $c_k=R_k^{-1}(Q_k^Tr_k)$.

If a certain iteration $k$, $V_{k-1}$ already has $N$ columns, the computation of the QR factors of $V_k$ requires first a downdating. Indeed, $V_k$ is obtained by removing the first column of $V_{k-1}$ and then adding a new column; cf., the definition of $\widetilde V_{k-1}$ in section~\ref{sec:boostconv}. If $V_{k-1}=Q_{k-1}R_{k-1}$ is such that 
$$ R_k=\begin{bmatrix}
\alpha & \ell^T \\
 & \widehat R_{k-1}\\
\end{bmatrix},
$$
where $\alpha\in\mathbb{R}$, $\ell\in\mathbb{R}^{N-1}$, and $\widehat R_{k-1}\in\mathbb{R}^{(N-1)\times (N-1)}$, then it is easy to show that a skinny QR decomposition of $\widetilde V_{k-1}=V_{k-1}[e_2,\ldots,e_N]$ can be cheaply computed; see, e.g.,~\cite[Section 12.5.2]{Golub2013}. In particular, we have
$$\widetilde V_{k-1}=V_{k-1}[e_2,\ldots,e_N]=Q_{k-1}R_{k-1}[e_2,\ldots,e_N]=Q_{k-1}\begin{bmatrix}
\ell^T \\
 \widehat R_{k-1}\\
\end{bmatrix},$$
and since the right-most matrix in the relation above is upper Hessenberg, it can be triangularized by applying, e.g., $N-1$ Givens rotation. Let $P\in\mathbb{R}^{N\times N}$ be the orthogonal matrix collecting these rotations, namely $P$ is such that 
$$
P\begin{bmatrix}
\ell^T \\
 \widehat R_{k-1}\\
\end{bmatrix}=\begin{bmatrix}
\widetilde R_{k-1} \\
 0\\
\end{bmatrix}.
$$
Then, a skinny QR of $\widetilde V_{k-1}$ is given by $\widetilde V_{k-1}=\widetilde Q_{k-1}\widetilde R_{k-1}$ where $\widetilde Q_{k-1}=Q_{k-1}P^T[e_1,\ldots,e_{N-1}]$. Once this is done, the skinny QR of $V_k=[\widetilde V_{k-1},r_{k-1}-r_k]=Q_kR_k$ is computed by updating $\widetilde Q_{k-1}$ and $\widetilde R_{k-1}$ as above. In Algorithm~\ref{alg:stabilizedBoostConv} we report this robust variant of the {\tt BoostConv} procedure. Notice that in this case we no longer need to store the matrix $V_k$ but we rather update its QR factors.  

 \begin{algorithm}[t]
\begin{algorithmic}[1]
%\setstretch{1.2}
\smallskip
\Statex \textbf{Input:} Matrices $W_{k-1}$, $Q_{k-1}\in\mathbb{R}^{n\times (\widetilde N+1)}$, $R_{k-1}\in\mathbb{R}^{(\widetilde N+1)\times (\widetilde N+1)}$, residual vectors $r_k$, $r_{k-1}$, and $\xi_{k-1}$, threshold $\tau>0$.
\Statex \textbf{Output:} Vector $\xi_k$ and updated matrices $W_{k}$, $Q_{k}$, and $R_{k}$.
\smallskip
\If{$k=1$}
\State Define $W_1=\xi_0+r_1-r_0$, $Q_1=(r_0-r_1)/\|r_0-r_1\|$, $R_1=\|r_0-r_1\|$
\Else
\If{$\widetilde N+1=N$}
\State Set $\widetilde W_{k-1}=W_{k-1}[e_2,\ldots,e_N]$
\State Compute $P\in\mathbb{R}^{N\times N}$ s.t. $PR_{k-1}[e_2,\ldots,e_N]=\begin{bmatrix}
\widetilde R_{k-1} \\
 0\\
\end{bmatrix}$ \label{alg_line_updateR}
\State Set $\widetilde Q_{k-1}=Q_{k-1}P^T[e_1,\ldots,e_{N-1}]$ \label{alg_line_updateQ}
\Else
\State $\widetilde W_{k-1}=W_{k-1}$, $\widetilde Q_{k-1}=Q_{k-1}$, $\widetilde R_{k-1}=R_{k-1}$

\EndIf
\State Compute $\widetilde q_k=(I-\widetilde Q_{k-1}\widetilde Q_{k-1}^T)(r_{k-1}-r_k)$ \label{alg_line_GS}
\If{$\|\widetilde q_k\|<\tau\|r_{k-1}-r_k\|$}
\State Set $W_k=W_{k-1}$, $Q_k=Q_{k-1}$, $R_k=R_{k-1}$
\Else 
\State $W_k=[\widetilde W_{k-1},\xi_{k-1}+r_{k}-r_{k-1}]$
\State $Q_k=[\widetilde Q_{k-1},\widetilde q_k/\|\widetilde q_k\|]$
\State $R_k=\begin{bmatrix}
\widetilde R_{k-1} & \widetilde Q_{k-1}^T(r_{k-1}-r_k)\\
 & \|\widetilde q_k\|\\
\end{bmatrix}$

\EndIf
\EndIf

\State Set $\xi_k=r_k+W_kR_k^{-1}Q_k^Tr_k$ 
\end{algorithmic}    \caption{Robust {\tt BoostConv}.  \label{alg:stabilizedBoostConv} }
\end{algorithm}

\begin{remark}
    We would like to stress that the robust {\tt BoostConv} scheme has a lower computational cost per iteration than its plain counterpart. In particular, the detection of a possible linear dependency between columns of $V_k$ comes for free when updating the QR factorization of the latter matrix. On the other hand, also a 
    more basic implementation of {\tt BoostConv} requires the computation of the QR factorization\footnote{Or different decompositions but with a similar computational cost.} of $V_k$ to solve the least squares problem~\eqref{eq:selection_ck}. Computing the skinny QR of 
    $V_k$ from scratch (basic {\tt BoostConv}) costs $\mathcal{O}(nN^2)$ floating point operations (flops) whereas, with our updating strategy, this cost becomes $\mathcal{O}(nN+N^2)$: $\mathcal{O}(nN)$ flops for the Gram-Schmidt step (line~\ref{alg_line_GS}) and the update of $\widetilde Q_{k-1}$ (line~\ref{alg_line_updateQ}), and other $\mathcal{O}(N^2)$ flops for the update of $\widetilde R_{k-1}$ (line~\ref{alg_line_updateR}). 
\end{remark}

\begin{remark}\label{remark_stabilizedBoost}
    It is easy to show that the employment of the robust {\tt BoostConv} implementation leads to an update of the current solution of the form 
    \begin{equation}\label{eq:iteration_boostconv_stabilized}
            x_{k+1}=x_k+B\xi_k=x_k-B \bF(x_k)- (\Delta X_k^{\mathcal I_k}-B \Delta F_k^{\mathcal I_k})(\Delta F_k^{\mathcal I_k})^\dagger \bF(x_k),
                \end{equation}
    where $\Delta X_k^{\mathcal I_k}=\Delta X_{k-1}$ if Algorithm~\ref{alg:stabilizedBoostConv} detects linear dependency, $\Delta X_k^{\mathcal I_k}=\Delta X_{k}$ otherwise. The same for $\Delta F_k^{\mathcal I_k}$. This means that also the update provided by Algorithm~\ref{alg:stabilizedBoostConv} satisfies conditions similar to the ones stated in section~\ref{sec:broyden} for plain {\tt BoostConv}. In particular, the matrix $B + (\Delta X_k^{\mathcal I_k}-B \Delta F_k^{\mathcal I_k})(\Delta F_k^{\mathcal I_k})^\dagger$ is such that 
    $$G\Delta F_k^{\mathcal I_k}=\Delta X_k^{\mathcal I_k},$$
and 
$$(G-B)q=0, \quad \text{for all }q\perp\text{Range}(\Delta F_k^{\mathcal I_k}).$$
Moreover,
$$B+(\Delta X_k^{\mathcal I_k}-B \Delta F_k^{\mathcal I_k})(\Delta F_k^{\mathcal I_k})^\dagger=\argmin_{G\Delta F_k^{\mathcal I_k}=\Delta X_k^{\mathcal I_k}}\|G-B\|_F.$$

\end{remark}

%%%%%%%%%%%%%%%%%%%%%%%%%%%%%%%%%%%%%%%
\subsection{Convergence properties}\label{Convergence properties}

In this section we adapt the convergence results 
of stabilized Anderson acceleration presented in~\cite[Section 5]{brezinski_anderson} to the case of robust {\tt BoostConv}. We first state the following assumption; cf., ~\cite[Assumption 5.2]{brezinski_anderson}.
\begin{assumption}\label{Assumption}
Let $\bF:\mathbb{R}^n\rightarrow\mathbb{R}^n$ in~\eqref{eq:nonlinear_system} be differentiable in a open convex set $E\subset\mathbb{R}^n$ and assume there exists $x_*\in E$
such that $\bF(x_*)=0$. Moreover, assume $\partial \bF/\partial x (x_*)$
to be invertible and that for all $\widehat x\in E$ it holds
$$\|\partial \bF/\partial x (\widehat x)-\partial \bF/\partial x (x_*)\|\leq L\|\widehat x-x_*\|,$$
for $L\geq0$.
\end{assumption}
Notice that the assumption above implies that 
$$\|\bF(y)-\bF(z) -\partial \bF/\partial x (y-z)\|\leq L\|y-z\|\cdot\max\{\|y-x_*\|,\|z-x_*\|\},$$
for all $y,z\in E$ and that there exists a neighborhood of $x_*$, that we denote by $U_\kappa(x_*):=\{y\in\mathbb{R}^n \text{ s.t. }\|y-x_*\|\leq \kappa\}$, such that, for some $\rho>0$, it holds
$$\frac{1}{\rho}\|y-z\|\leq\|\bF(y)-\bF(z)\|\leq\rho\|y-z\|.$$

This assumption is standard in the local convergence analysis of Newton and quasi-Newton methods and is not restrictive in the present setting. In particular, the differentiability of $\bF$ together with the local Lipschitz continuity of its Jacobian are mild regularity requirements satisfied by a broad class of nonlinear problems arising from the discretisation of smooth differential equations and nonlinear residual operators. Furthermore, invertibility of the derivative of $ \bF$ with respect to $x$ in $x_\ast$ corresponds to the classical nonsingularity condition at a regular solution. This guarantees the local well-posedness of the problem and underlies most local convergence results for multisecant and Anderson-type acceleration schemes. Consequently, Assumption \ref{Assumption} aligns with the standard analytical framework adopted in the literature and does not impose additional structural constraints beyond those commonly required for local analysis.

%{\color{red} @Vincenzo: can you add some comments about how this assumption is not restrictive in our setting?}

For the sake of simplicity, we assume that Algorithm~\ref{alg:stabilizedBoostConv} is applied at each iteration $k$ in~\eqref{eq:basic_iteration}, namely all the outer iterations have the form~\eqref{eq:iteration2} with $\xi_k$ computed by robust {\tt BoostConv}. In this scenario, under the hypotheses of Assumption~\ref{Assumption}, if $B=-\beta I$ in~\eqref{eq:iteration2}, the results in~\cite[Theorem 5.8]{brezinski_anderson} hold for Algorithm~\ref{alg:stabilizedBoostConv}. However, we would like to derive convergence results for generic $B$. This requires some technical modifications to the contributions in~\cite[Section 5]{brezinski_anderson} that we elaborate in this section. We first recall the definition of the Frobenius norm of a matrix $X\in\mathbb{R}^{m\times n}$
$$\|X\|_F^2:=\text{trace}(X^TX)=\sum_{i=1}^n\|Xe_i\|^2.$$

\begin{lemma}\label{lemma5.6}
At iteration $k$ of the form~\eqref{eq:iteration_boostconv_stabilized}, if $\widehat N$ denotes the cardinality of $\mathcal{I}_k$, let the $\widehat N$ iterates $x_{k-d}$, $d=0,\ldots,\widehat N-1$
 all belong to $U_\kappa(x_*)$. Then, it holds
$$\|\widehat{\Delta X}^{\mathcal{I}_k}_ke_j-(\partial \bF/\partial x)^{-1}q_j\|\leq C\kappa_F(R_k)\sum_{p=1}^j\ell_p,\quad\text{for all }j=1,\ldots,\widehat N,$$
where $\widehat{\Delta X}_k^{\mathcal I_k}:={\Delta X}_k^{\mathcal I_k}R_k^{-1}$, $q_j$ denotes the $j$th column of the orthogonal matrix $Q_k=[q_1,\ldots, q_{\widehat N}]\in\mathbb{R}^{n\times \widehat N}$ provided by Algorithm~\ref{alg:stabilizedBoostConv}, 
$C=\|(\partial \bF/\partial x)^{-1}\|L\rho$, $\ell_p=\max\{\|x_{k-\widehat N+p}-x_*\|,\|x_{k-\widehat N+p-1}-x_*\|\}$, and $\kappa_F(R_k)$ denotes the condition number, in the Frobenius norm, of $R_k$, namely $\kappa_F(R_k)=\|R_k\|_F\|R_k^{-1}\|_F$.
\end{lemma}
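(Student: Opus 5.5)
The plan follows the strategy of~\cite[Lemma 5.6]{brezinski_anderson}, with $J:=\partial \bF/\partial x(x_*)$ playing the role of the Jacobian.

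\emph{Step 1: columnwise secant defects.} Denote the columns of $\Delta X_k^{\mathcal I_k}$ and $\Delta F_k^{\mathcal I_k}$ by $s_p$ and $y_p$, $p=1,\ldots,\widehat N$. By the construction in Algorithm~\ref{alg:stabilizedBoostConv}, $y_p=\bF(x_{k-\widehat N+p})-\bF(x_{k-\widehat N+p-1})$ and $s_p=x_{k-\widehat N+p}-x_{k-\widehat N+p-1}$, with the indexing taken to be the retained consecutive pairs. Since $\Delta F_k^{\mathcal I_k}=V_k$, we have $y_p=-(r_{k-\widehat N+p}-r_{k-\widehat N+p-1})$ up to the paper's sign convention.

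The consequence of Assumption~\ref{Assumption} gives
$$\|y_p-Js_p\|\le L\|s_p\|\ell_p.$$
Applying $J^{-1}$ and the lower Lipschitz bound $\|s_p\|\le\rho\|y_p\|$, valid on $U_\kappa(x_*)$, yields
$$\|s_p-J^{-1}y_p\|\le \|J^{-1}\|L\rho\,\|y_p\|\,\ell_p = C\|y_p\|\ell_p.$$
This is the only place where the hypothesis on $U_\kappa(x_*)$ enters.

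\emph{Step 2: pass to the orthonormal basis.} Since $\Delta F_k^{\mathcal I_k}=Q_kR_k$ with $R_k$ upper triangular, we have $q_j=\Delta F_k^{\mathcal I_k}R_k^{-1}e_j=\sum_{p=1}^j y_p\,(R_k^{-1})_{pj}$ and $\widehat{\Delta X}_k^{\mathcal I_k}e_j=\sum_{p=1}^j s_p\,(R_k^{-1})_{pj}$. Therefore
$$\widehat{\Delta X}_k^{\mathcal I_k}e_j-J^{-1}q_j=\sum_{p=1}^j (R_k^{-1})_{pj}\,(s_p-J^{-1}y_p),$$
and the triangle inequality together with Step~1 gives
$$\|\cdot\|\le C\sum_{p=1}^j|(R_k^{-1})_{pj}|\,\|y_p\|\,\ell_p.$$

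\emph{Step 3: bound the coefficients.} We have $\|y_p\|=\|R_ke_p\|\le\|R_k\|_F$ and $|(R_k^{-1})_{pj}|\le\|R_k^{-1}\|_F$. Hence each product satisfies $|(R_k^{-1})_{pj}|\,\|y_p\|\le\kappa_F(R_k)$, which gives the claim.

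\emph{Anticipated difficulty.} The main subtlety is bookkeeping rather than analysis. When Algorithm~\ref{alg:stabilizedBoostConv} discards a column, the retained columns of $V_k$ need not be consecutive differences $x_{k-\widehat N+p}-x_{k-\widehat N+p-1}$. The definition of $\ell_p$ in the statement implicitly assumes they are, together with the downdating via Givens rotations leaving $V_k$ itself unchanged (only $Q,R$ are recomputed). I would handle this by indexing $\ell_p$ by the pair of iterates actually generating the $p$th retained column; the hypothesis that those iterates lie in $U_\kappa(x_*)$ is then exactly what is needed.

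A second point to check is the sign convention $V_k=\Delta F_k$ versus $r=-\bF$. Since $s_p$ and $y_p$ enter linearly and consistently, this does not affect the estimate. Finally, I should ensure that $J^{-1}$ in the statement means the inverse Jacobian at $x_*$, so that the Lipschitz estimate in Assumption~\ref{Assumption} applies directly.
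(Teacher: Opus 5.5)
Your proposal is correct and follows essentially the same route as the paper. Both arguments expand $(\Delta X_k^{\mathcal I_k}-J^{-1}\Delta F_k^{\mathcal I_k})R_k^{-1}e_j$ columnwise using the upper triangular structure of $R_k^{-1}$, bound each secant defect through Assumption~\ref{Assumption} and the $\rho$-bound, and conclude with $\|R_ke_p\|\le\|R_k\|_F$ and $|(R_k^{-1})_{pj}|\le\|R_k^{-1}\|_F$.

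Your bookkeeping caveat is a point the paper's proof passes over silently. The retained columns need not be consecutive differences, and every iterate generating them must lie in $U_\kappa(x_*)$; for consecutive pairs this includes $x_{k-\widehat N}$, which the stated hypothesis omits. Indexing $\ell_p$ by the pair that actually generates the $p$th column is the right fix.
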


\begin{proof}
%For $j=1$, we have
%\begin{align*}
%\|\widehat{\Delta X}^{\mathcal{I}_k}_ke_1-(\partial \bF/\partial x)^{-1}q_1\|=&\,\|\Delta X^{\mathcal{I}_k}_kR_k^{-1}e_1-(\partial \bF/\partial x)^{-1}q_1\|\\
%=&\,\left\|-\frac{1}{(R_k)_{1,1}}(\partial \bF/\partial x)^{-1}\left(\Delta F^{\mathcal{I}_k}_ke_1-\partial \bF/\partial x\Delta X^{\mathcal{I}_k}_ke_1\right)\right\|\\
%\leq&\, L\frac{\|(\partial \bF/\partial x)^{-1}\|}{|(R_k)_{1,1}|} 
%\|\Delta X^{\mathcal{I}_k}_ke_1\|\ell_1\leq L\rho\frac{\|(\partial \bF/\partial x)^{-1}\|}{|(R_k)_{1,1}|} 
%\|\Delta F^{\mathcal{I}_k}_ke_1\|\ell_1\\
%=&\,C\ell_1,
%\end{align*}
%where the inequalities are due to Assumption~\ref{Assumption} whereas the last equality comes from the fact that $(R_k)_{1,1}=\|\Delta F^{\mathcal{I}_k}_ke_1\|$ by construction.

For a generic $j\in\{1,\ldots,\widehat N\}$, we have
\begin{align*}
\|\widehat{\Delta X}^{\mathcal{I}_k}_ke_j-(\partial \bF/\partial x)^{-1}q_j\|=&\,\|\left(\Delta X^{\mathcal{I}_k}_k-(\partial \bF/\partial x)^{-1} \Delta F^{\mathcal{I}_k}\right)R_k^{-1}e_j\|\\
=&\,\left\|-(\partial \bF/\partial x)^{-1}\left(\Delta F^{\mathcal{I}_k}_k-\partial \bF/\partial x\Delta X^{\mathcal{I}_k}_k\right)R_k^{-1}e_j\right\|\\
=&\,\left\|-(\partial \bF/\partial x)^{-1}\sum_{p=1}^j\left(\Delta F^{\mathcal{I}_k}_ke_p-\partial \bF/\partial x\Delta X^{\mathcal{I}_k}_ke_p\right)(R_k^{-1})_{p,j}\right\|\\
\leq&\, L\|(\partial \bF/\partial x)^{-1}\|
\sum_{p=1}^j\|\Delta X^{\mathcal{I}_k}_ke_p\|\ell_p|(R_k^{-1})_{p,j}|\\
\leq &\,L\rho\|(\partial \bF/\partial x)^{-1}\| 
\sum_{p=1}^j\|\Delta F^{\mathcal{I}_k}_ke_p\|\ell_p|(R_k^{-1})_{p,j}|\\ 
=&\, C \sum_{p=1}^j\|R_ke_p\||(R_k^{-1})_{p,j}|\ell_p,
\end{align*}
where in the third equality we exploited the upper triangular pattern of $R_k^{-1}$, in the first and second inequality Assumption~\ref{Assumption}, whereas for the last equality we recall that  $\Delta F_k^{\mathcal{I}_k}=Q_kR_k$.
Moreover, since $\|R_k\|_F^2=\|\sum_{j=1}^{\widehat N}\|R_ke_p\|^2$, it holds
$\|R_k\|_F\geq \|R_ke_j\|$ for any $j=1,\ldots,\widehat N$. Similarly, $\|R_k^{-1}\|_F\geq |(R_k^{-1})_{p,j}|$ for any $p$ and $j$. Therefore, we can write
$$\|\widehat{\Delta X}^{\mathcal{I}_k}_ke_j-(\partial \bF/\partial x)^{-1}q_j\|\leq C\kappa_F(R_k) \sum_{p=1}^j\ell_p,
$$
getting the result.
\end{proof}

We would like to highlight the role of the condition number $\kappa_F(R_k)$. This encodes the linear dependency in the columns of $\Delta F_k^{\mathcal{I}_k}$ and it could become dramatically large in case of a (almost) rank deficient $\Delta F_k^{\mathcal{I}_k}$. However, this scenario gets completely avoided thanks to the robust approach implemented in Algorithm~\ref{alg:stabilizedBoostConv}. Indeed, the (almost) linear dependent columns of $\Delta F_k$ get discarded so that $\kappa_F(R_k)$ is kept under control, for any $k$.

\begin{lemma}\label{lemma5.7}
At iteration $k$ of the form~\eqref{eq:iteration_boostconv_stabilized}, the following equality hold
{\small
$$B + (\Delta X_k^{\mathcal I_k}-B \Delta F_k^{\mathcal I_k})(\Delta F_k^{\mathcal I_k})^\dagger-(\partial\bF/\partial x)^{-1}=(B-(\partial\bF/\partial x)^{-1})(I-Q_kQ_k^T)+(\widehat{\Delta X}_k^{\mathcal I_k}-(\partial\bF/\partial)^{-1}Q_k)Q_k^T.$$
}
Moreover, under the hypotheses in Lemma~\ref{lemma5.6}, further assuming that $\ell_p\leq \varepsilon$ for all $p=1,\ldots,\widehat N$, then there exists a constant $\alpha=\alpha(\widehat N)$ such that
$$\|(\widehat{\Delta X}_k^{\mathcal I_k}-(\partial\bF/\partial)^{-1}Q_k)Q_k^T\|_F\leq C \kappa_F(R_k)\varepsilon\alpha.$$
\end{lemma}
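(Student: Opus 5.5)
The identity is pure algebra, and I will derive it from the QR factorization $\Delta F_k^{\mathcal I_k}=Q_kR_k$ produced by Algorithm~\ref{alg:stabilizedBoostConv}. Throughout, $\partial\bF/\partial x$ is evaluated at $x_*$, consistently with Lemma~\ref{lemma5.6}. Since $R_k$ is square and invertible (Algorithm~\ref{alg:stabilizedBoostConv} guarantees $\|\widetilde q_k\|\ge\tau\|r_{k-1}-r_k\|>0$ on the diagonal) and $Q_k$ has orthonormal columns, we have $(\Delta F_k^{\mathcal I_k})^\dagger=R_k^{-1}Q_k^T$. Consequently
$$(\Delta X_k^{\mathcal I_k}-B\Delta F_k^{\mathcal I_k})(\Delta F_k^{\mathcal I_k})^\dagger=\widehat{\Delta X}_k^{\mathcal I_k}Q_k^T-BQ_kQ_k^T .$$
Adding $B$ and subtracting $(\partial\bF/\partial x)^{-1}$ gives $B(I-Q_kQ_k^T)+\widehat{\Delta X}_k^{\mathcal I_k}Q_k^T-(\partial\bF/\partial x)^{-1}$. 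I then split the last term as $(\partial\bF/\partial x)^{-1}\bigl[(I-Q_kQ_k^T)+Q_kQ_k^T\bigr]$ and regroup. This yields exactly $(B-(\partial\bF/\partial x)^{-1})(I-Q_kQ_k^T)+(\widehat{\Delta X}_k^{\mathcal I_k}-(\partial\bF/\partial x)^{-1}Q_k)Q_k^T$, which is the claimed equality.

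For the bound, set $M:=\widehat{\Delta X}_k^{\mathcal I_k}-(\partial\bF/\partial x)^{-1}Q_k$. Because $Q_k$ has orthonormal columns, $\|MQ_k^T\|_F^2=\mathrm{trace}(Q_kM^TMQ_k^T)=\mathrm{trace}(M^TM)=\|M\|_F^2$. By the definition of the Frobenius norm recalled above, $\|M\|_F^2=\sum_{j=1}^{\widehat N}\|Me_j\|^2$. Each column is controlled by Lemma~\ref{lemma5.6}: $\|Me_j\|\le C\kappa_F(R_k)\sum_{p=1}^j\ell_p\le C\kappa_F(R_k)\,j\varepsilon$, using $\ell_p\le\varepsilon$. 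Summing over $j$ gives
$$\|MQ_k^T\|_F\le C\kappa_F(R_k)\varepsilon\Bigl(\sum_{j=1}^{\widehat N}j^2\Bigr)^{1/2},$$
so the statement holds with $\alpha(\widehat N)=\sqrt{\widehat N(\widehat N+1)(2\widehat N+1)/6}$.

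No real obstacle is expected. The only points needing care are the justification of $(\Delta F_k^{\mathcal I_k})^\dagger=R_k^{-1}Q_k^T$, which requires full column rank and is exactly what the robust algorithm enforces, and the invariance of the Frobenius norm under right multiplication by $Q_k^T$. The hypotheses of Lemma~\ref{lemma5.6}, namely that the relevant iterates lie in $U_\kappa(x_*)$, are inherited verbatim from the statement.
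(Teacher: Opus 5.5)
Your proposal is correct and follows the paper's proof essentially step by step. You obtain the identity from $(\Delta F_k^{\mathcal I_k})^\dagger=R_k^{-1}Q_k^T$, then bound the second term using invariance of the Frobenius norm under right multiplication by $Q_k^T$, the columnwise estimate of Lemma~\ref{lemma5.6} with $\ell_p\le\varepsilon$, and $\sum_{j=1}^{\widehat N} j^2=\widehat N(\widehat N+1)(2\widehat N+1)/6$, which gives the same $\alpha(\widehat N)$ as the paper (and you correctly keep $\kappa_F^2(R_k)$ in the squared bound, where the paper's intermediate line drops the square).
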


\begin{proof}
The first equality comes from a direct computation recalling that 
$$B + (\Delta X_k^{\mathcal I_k}-B \Delta F_k^{\mathcal I_k})(\Delta F_k^{\mathcal I_k})^\dagger=B + (\widehat{\Delta X}_k^{\mathcal I_k}-B Q_k)Q_k^T.$$
Moreover, thanks to Lemma~\ref{lemma5.6}, we have
\begin{align*}
\|(\widehat{\Delta X}_k^{\mathcal I_k}-(\partial\bF/\partial)^{-1}Q_k)Q_k^T\|_F^2=&\|\widehat{\Delta X}_k^{\mathcal I_k}-(\partial\bF/\partial)^{-1}Q_k\|_F^2=\sum_{j=1}^{\widehat N}\|\widehat{\Delta X}_k^{\mathcal I_k}e_j-(\partial\bF/\partial)^{-1}q_j\|^2\\
\leq&\, C^2 \kappa_F(R_k)\sum_{j=1}^{\widehat N}\left(\sum_{p=1}^j\ell_p\right)^2\\
\leq&\, C^2\varepsilon^2 \kappa_F^2(R_k)\frac{\widehat N(\widehat N+1)(2\widehat N+1)}{6},
\end{align*}
where in the last step we used the identity $\sum_{j=1}^{\widehat N}j^2=\widehat N(\widehat N+1)(2\widehat N+1)/6$.
\end{proof}

An immediate consequence of Lemma~\ref{lemma5.7} is that 
\begin{equation}
    \|B + (\Delta X_k^{\mathcal I_k}-B \Delta F_k^{\mathcal I_k})(\Delta F_k^{\mathcal I_k})^\dagger-(\partial\bF/\partial)^{-1}\|\leq \|B-(\partial\bF/\partial)^{-1}\|+C\kappa_F(R_k)\varepsilon\alpha,
\end{equation}
as the spectral norm of a matrix is always less or equal than its Frobenius norm.
This will be exploited in the next theorem to assess the convergence of~\eqref{eq:iteration_boostconv_stabilized}.

\begin{theorem}\label{Theorem_conv}
Under the hypotheses and notation of Assumption~\ref{Assumption}, let $\{x_j\}_{j\geq1}$ be the sequence of iterates produced by~\eqref{eq:iteration_boostconv_stabilized} with $\xi_j$ computed by Algorithm~\ref{alg:stabilizedBoostConv}. Then, assume there exist $\delta$ and $\varepsilon$ such that
$$\|B -(\partial \bF/\partial x)^{-1}\|\leq \delta,\quad\text{and}\quad \|x_0-x_*\|\leq\varepsilon,$$
and $\|(\partial\bF/\partial x)^{-1}\|L\varepsilon +(\delta+C\max_{i=1,\ldots,j}\kappa_F(R_i)\varepsilon\alpha)\rho<1$, where $C$ and $\alpha$ are as in Lemma~\ref{lemma5.6}. Then, 
there exists $q=q(\delta,\varepsilon)\in(0,1)$
such that 
$$x_{j+1}\in E,\quad \text{and}\quad \|x_{j+1}-x_*\|\leq q \|x_{j}-x_*\|.$$
\end{theorem}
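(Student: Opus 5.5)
The plan is an induction on $j$ that mirrors the proof of~\cite[Theorem 5.8]{brezinski_anderson}. The inductive hypothesis is that all previous iterates $x_0,\ldots,x_j$ lie in $U_\varepsilon(x_*)\subset E$, with $\varepsilon\le\kappa$ shrunk if necessary so that the bi-Lipschitz bounds of Assumption~\ref{Assumption} hold. It also asserts $\|x_{i+1}-x_*\|\le q\|x_i-x_*\|$ for $i<j$, so that every $\ell_p$ appearing in Lemma~\ref{lemma5.6} at iteration $j$ satisfies $\ell_p\le\varepsilon$. This is exactly what is needed to invoke Lemma~\ref{lemma5.7}.

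Write $J=\partial\bF/\partial x(x_*)$ and $G_j=B+(\Delta X_j^{\mathcal I_j}-B\Delta F_j^{\mathcal I_j})(\Delta F_j^{\mathcal I_j})^\dagger$, so that~\eqref{eq:iteration_boostconv_stabilized} reads $x_{j+1}=x_j-G_j\bF(x_j)$. Using $\bF(x_*)=0$, the error splits as
\begin{align*}
x_{j+1}-x_*=&\,\bigl(x_j-x_*-J^{-1}\bF(x_j)\bigr)-(G_j-J^{-1})\bF(x_j)\\
=&\,-J^{-1}\bigl(\bF(x_j)-\bF(x_*)-J(x_j-x_*)\bigr)-(G_j-J^{-1})\bigl(\bF(x_j)-\bF(x_*)\bigr).
\end{align*}
The first term is bounded, via the consequence of Assumption~\ref{Assumption} stated right after it (taking $y=x_j$, $z=x_*$), by $\|J^{-1}\|L\|x_j-x_*\|^2\le\|J^{-1}\|L\varepsilon\|x_j-x_*\|$. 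The second term is bounded by $\|G_j-J^{-1}\|\,\rho\,\|x_j-x_*\|$, using the upper Lipschitz bound $\|\bF(x_j)-\bF(x_*)\|\le\rho\|x_j-x_*\|$. For $\|G_j-J^{-1}\|$ I use the inequality following Lemma~\ref{lemma5.7}: it gives at most $\delta+C\kappa_F(R_j)\varepsilon\alpha\le\delta+C\max_i\kappa_F(R_i)\varepsilon\alpha$. Collecting terms yields $\|x_{j+1}-x_*\|\le q\|x_j-x_*\|$ with $q:=\|J^{-1}\|L\varepsilon+(\delta+C\max_i\kappa_F(R_i)\varepsilon\alpha)\rho<1$ by hypothesis.

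Consequently $\|x_{j+1}-x_*\|\le q\varepsilon<\varepsilon$, so $x_{j+1}\in U_\varepsilon(x_*)\subset E$ and the induction closes. The base case $j=0$ is handled the same way: $\mathcal I_0$ is empty, or $G_0=B$, and the estimate holds with just $\delta$. Note that $\alpha=\alpha(\widehat N)$ can be taken as $\alpha(N)$, which is uniform in $j$ since $\widehat N\le N$.

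The main obstacle is the bookkeeping behind applying Lemma~\ref{lemma5.6}. That lemma uses $J$ evaluated at $x_*$, but it is applied along segments between consecutive iterates, so all $\widehat N$ window iterates must lie in $U_\kappa(x_*)\cap E$; it also requires $\ell_p\le\varepsilon$. Both facts must come from the inductive hypothesis rather than being assumed at step $j$ alone. This in turn requires the uniformity of $\max_i\kappa_F(R_i)$, which is guaranteed by the threshold $\tau$ in Algorithm~\ref{alg:stabilizedBoostConv}, and a careful choice of $\varepsilon\le\kappa$ at the outset.
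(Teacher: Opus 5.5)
Your proposal is correct and follows the paper's own proof. It uses the same induction keeping the iterates in $U_\varepsilon(x_*)$, the same splitting of $x_{j+1}-x_*$ into the remainder $-(\partial\bF/\partial x)^{-1}\bigl(\bF(x_j)-\bF(x_*)-(\partial\bF/\partial x)(x_j-x_*)\bigr)$ plus the term $(G_j-(\partial\bF/\partial x)^{-1})(\bF(x_j)-\bF(x_*))$ controlled via Lemma~\ref{lemma5.7} and the $\rho$-bound, and $G_0=B$ in the base case. Keeping $\|x_j-x_*\|$ throughout actually gives the stated contraction $\|x_{j+1}-x_*\|\le q\|x_j-x_*\|$ more cleanly than the paper's chain, which ends with $q\|x_0-x_*\|$; but $\varepsilon\le\kappa$ cannot be arranged by shrinking $\varepsilon$ (it is tied to $\|x_0-x_*\|\le\varepsilon$), so it is an implicit assumption also made by the paper, and the bound on $\max_i\kappa_F(R_i)$ comes from the theorem's hypothesis, not from $\tau$, which does not control the ratios of column norms of $\Delta F_j^{\mathcal I_j}$.
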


\begin{proof}
Let $q$ be the smallest number in $(0,1)$ such that  
$$\|(\partial\bF/\partial x)^{-1}\|L\varepsilon +(\delta+C\max_{i=1,\ldots,j}\kappa_F(R_i)\varepsilon\alpha)\rho<q,$$
in a way that $U_\varepsilon(x_*) \subseteq U_\kappa(x_*) \subseteq E$.

For $j=0$, we have
\begin{align*}
\|x_1-x_*\|=&\, \|x_0-x_*-B \bF(x_0)\|\\
\leq&\,\|x_0-x_*-(\partial\bF/\partial x)^{-1}(\bF(x_0)-\bF(x_*))\|+\|(\partial\bF/\partial x)^{-1}(\bF(x_0)-\bF(x_*))  
-B \bF(x_0)\| \\
\leq &\, \|(\partial\bF/\partial x)^{-1}\|\|(\partial\bF/\partial x)(x_0-x_*)-(\bF(x_0)-\bF(x_*))\|\\
&\,+\|(\partial\bF/\partial x)^{-1}-B\| \|\bF(x_0)-\bF(x_*)\|
\\
\leq&\, (\|(\partial\bF/\partial x)^{-1}\|L\varepsilon+\delta\rho) \|x_0-x_*\|\leq  q\|x_0-x_*\|\leq \varepsilon,
\end{align*}
so that also $x_1\in U_\varepsilon(x_*)\subseteq E$.

Now assume that for any $j\geq 0$, $\|x_j-x_*\|\leq  q^j\|x_0-x_*\|$ and hence $x_j\in U_\varepsilon(x_*)\subseteq E$. We have
\begin{align*}
\|x_{j+1}-x_*\|=&\, \|x_{j}-x_*-B \bF(x_{j})- (\Delta X_{j}^{\mathcal I_{j}}-B \Delta F_{j}^{\mathcal I_{j}})(\Delta F_{j}^{\mathcal I_{j}})^\dagger \bF(x_{j})\|\\
\leq&\,\|x_{j}-x_*-(\partial\bF/\partial x)^{-1}(\bF(x_{j})-\bF(x_*))\|\\
&\, +\left(\|(\partial\bF/\partial x)^{-1}  
-B \| +\|(\widehat{\Delta X}_{j}^{\mathcal I_{j}}-(\partial\bF/\partial x)^{-1} Q_{j})Q_{j}^T\| 
\right)\|\bF(x_{j})-\bF(x_*)\|\\
\leq &\, (\|(\partial\bF/\partial x)^{-1}\|L\varepsilon q^j\|x_0-x_*\|+(\delta+C\kappa_F(R_j)\varepsilon\alpha)\rho\|x_0-x_*\|\\
\leq &\, \left(\|(\partial\bF/\partial x)^{-1}\|L\varepsilon +(\delta+C\kappa_F(R_j)\varepsilon\alpha)\rho\right)\|x_0-x_*\|
\leq  q\|x_0-x_*\|.
\end{align*}

\end{proof}

We would like to underline the lack of any strong assumptions in Theorem~\ref{Theorem_conv}, in addition to Assumption~\ref{Assumption} and those stated in the theorem. In particular, writing $\bF$ in a fixed-point manner, namely $\bF(x)=x-\bG(x)$ for a certain non linear $\bG$, we do not require the contractivity or nonexpansivity of the latter map. 

Theorem~\ref{Theorem_conv} can be seen as a generalization of~\cite[Theorem 5.8]{brezinski_anderson} to the case $B\neq -\beta I$. Since in the literature about Anderson acceleration, the scalar $\beta$ is often called \emph{mixing parameter}, we can think of $B$ as a \emph{multi-mixing parameter}. We believe that many results scattered in the literature about Anderson acceleration can be generalized to the multi-mixing setting as well. In particular, we envision the possibility to show an improvement in the convergence of linearly-convergent fixed-point schemes thanks to our multi-mixing approach, as shown in~\cite{EvansEtAl2020} in the case of plain Anderson acceleration. We leave this interesting research venue to be studied elsewhere.

{\color{black} At the end of section~\ref{sec:boostconv} we mentioned that {\tt BoostConv} applied to~\eqref{eq:basic_iteration}, namely iteration~\eqref{eq:iteration2}, can be seen as a standard Anderson acceleration applied to the  ``preconditioned'' problem $1/\beta B\bF(x)=0$, for some $\beta\neq 0$. As stated in~\cite{brezinski_anderson}, this point of view allows us to directly apply the convergence results of~\cite[Theorem 5.8]{brezinski_anderson} to our scheme. However, we would like to mention that in this case the condition $\|B -(\partial \bF/\partial x)^{-1}\|\leq \delta$ we have in Theorem~\ref{Theorem_conv} would necessarily become $\| I -(\partial \bF/\partial x)^{-1}B^{-1}\|\leq \delta$ thus implicitly requiring the nonsingularity of $B$. This means that the results of Theorem~\ref{Theorem_conv} are more general as they allow for the adoption of a singular matrix $B$.}

We conclude this section by mentioning that it may be possible to improve the local convergence result of Theorem~\ref{Theorem_conv} by introducing a step-length parameter $\eta_k\in\mathbb{R}$ so that~\eqref{eq:iteration_boostconv_stabilized} becomes
$$x_{k+1}=x_k+\eta_kB\xi_k.$$
The selection of $\eta_k$ could mimic what is done in classic approaches from the quasi-Newton literature; see, e.g.,~\cite[Chapter 6]{dennis_schnabel}.

\section{Numerical results}\label{Numerical Results}
In this section we present several numerical tests corroborating the theoretical findings we derived in the previous sections. The Matlab code for reproducing the results in section~\ref{subsec:linear} and~\ref{subsec:burgers} can be found at \url{https://github.com/palittaUniBO/robustBoostConv/tree/main}.

%%%%%%%%%%%%%%%%%%%%%%%%%%%%%%%%%%%%%%%
{\color{black}
\subsection{A linear problem}
\label{subsec:linear}
We start by applying Algorithm~\ref{alg:stabilizedBoostConv} to the solution of a linear system of equations of the form $Ax=b$. As $A\in\mathbb{R}^{n\times n}$ and $b\in\mathbb{R}^n$ we consider the matrix {\tt fidap029} available in the Matrix Market\footnote{\url{https://math.nist.gov/MatrixMarket/data/SPARSKIT/fidap/fidap029.html}} and the related right-hand side {\tt fidap029\_rhs1} so that $n=2\,870$.

We choose two different stationary iterative methods for the solution of $Ax=b$. Each of these schemes can be written in the form~\eqref{eq:basic_iteration} for different $B$ and their convergence is driven by the spectral radius $\rho$ of the iteration matrix $I-BA$.
\begin{itemize}
    \item Richardson iteration: $B=I$. We have $\rho(I-BA)=0.9987$ so that we expect a rather slow convergence of this method;
    \item Jacobi method: $B=D^{-1}$ where $D$ denotes the diagonal matrix having on the diagonal the diagonal elements of $A$. Since $\rho(I-BA)=1.1350$ this method is expected to diverge.
    %\item TSVD: we select as $B$ the diagonal matrix whose first $1\,000$ diagonal elements are the reciprocal of the first $1\,000$ singular values of $A$ whereas all the other diagonal entries are zero. This means that this matrix $B$ is singular.
\end{itemize}

In Figure~\ref{fig:linear} we report the convergence history obtained by running up to 50 iterations of the basic iterative schemes~\eqref{eq:basic_iteration} (solid line) and the related counterpart~\eqref{eq:iteration2} enhanced by Algorithm~\ref{alg:stabilizedBoostConv} for $N=3$ and $\tau=10^{-10}$ (dotted line) for Richardson (left) and Jacobi (right). 

\begin{figure} \centering\includegraphics[width=\textwidth]{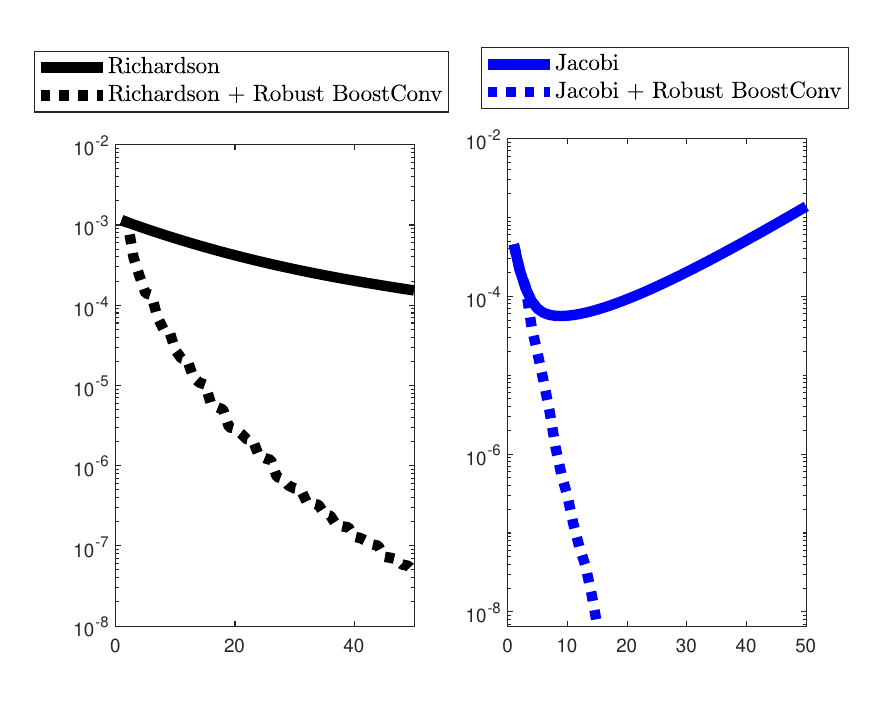} \caption{Relative residual norm history achieved by the plain version (solid line) of Richardson (left) and Jacobi (right) and their counterparts enhanced by Algorithm~\ref{alg:stabilizedBoostConv} with $N=3$ and $\tau=10^{-10}$ (dotted line) for the solution od $Ax=b$.}
\label{fig:linear} \end{figure}

Let's start with the Richardson iteration. Since $B=I$ in this case, applying {\tt BoostConv} is equivalent to adopting the plain Anderson acceleration and the convergence of the Richardson iteration equipped with Anderson acceleration has been studied in~\cite{Pasini}. Figure~\ref{fig:linear} (left) confirms the ability of this approach in significantly boosting the convergence rate of the plain Richardson iteration. 

As we can see from Figure~\ref{fig:linear} (right), the Jacobi method is not really able to solve this linear system. Indeed, after a timid decrease of the residual norm in the first iterations, the latter starts growing dramatically. This was expected by looking at $\rho(I-BA)$. On the other hand, thanks to the use of Algorithm~\ref{alg:stabilizedBoostConv}, the residual norm provided by~\eqref{eq:iteration2} rapidly decreases getting below $10^{-8}$ in less than 20 iterations. In this case, {\tt BoostConv} is thus able to fully retrieve the convergence of a diverging method, and not only to speed up its convergence rate. 
We conclude by noting that the robust version of {\tt BoostConv} (Algorithm~\ref{alg:stabilizedBoostConv}) and its plain variant from~\cite{CITRO2017234} yield very similar results in this example, since a very small value of $N$ was used. For this reason, we report only the results obtained with the former.
}
%%%%%%%%%%%%%%%%%%%%%%%%%%%%%%%%%%%%%%%
\subsection{Convergence acceleration for the 1D Burgers' equation}
\label{subsec:burgers}

\begin{figure}  \centering\includegraphics[width=\textwidth]{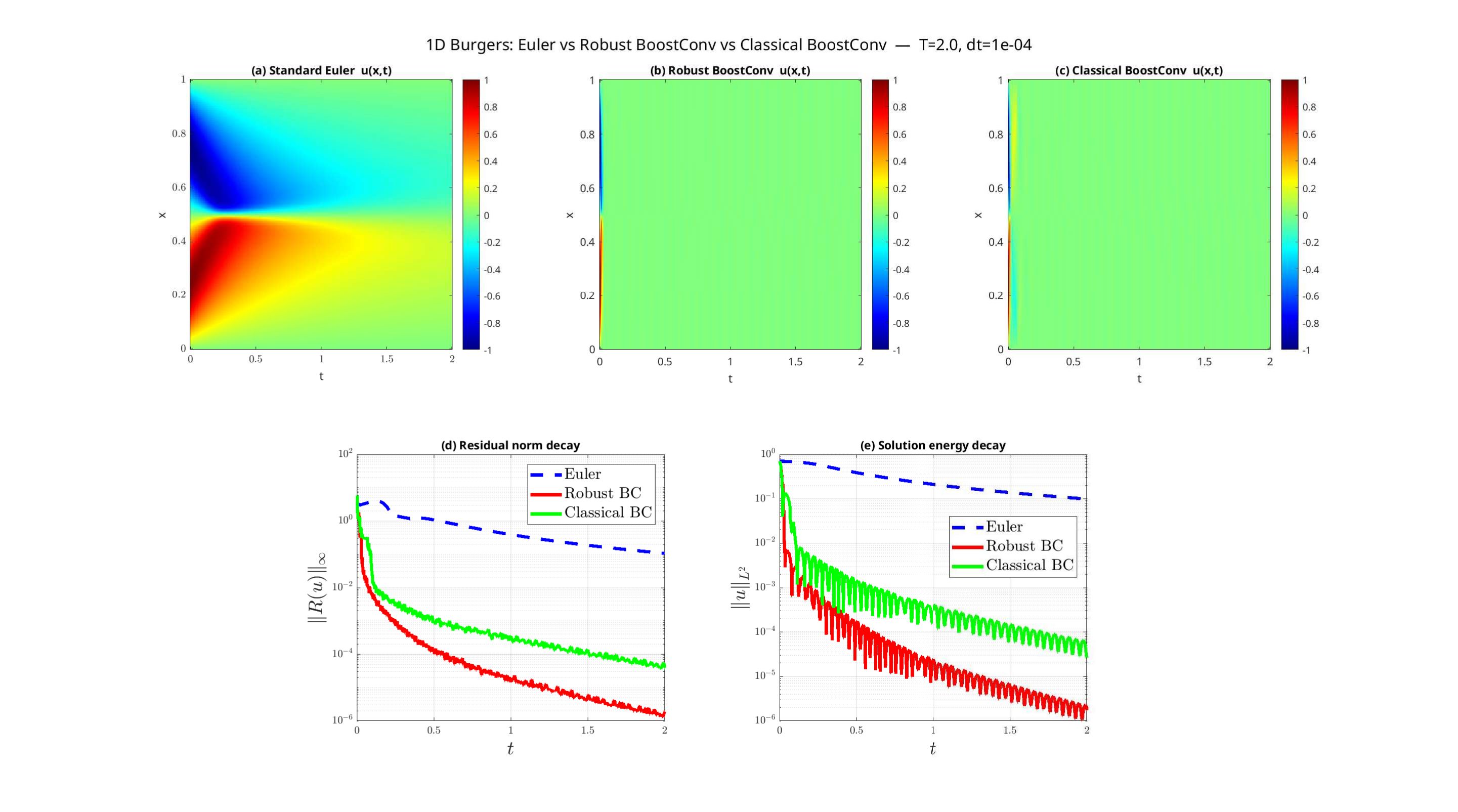} \caption{Comparison between standard Explicit Euler integration and {\tt BoostConv} acceleration for the 1D Burgers' equation. 
Panel \textbf{(a)} shows the temporal evolution of the solution $u(x,t)$ obtained using the standard Explicit Euler scheme. 
The system progressively relaxes toward the trivial steady solution $u(x,t)=0$. 
Panels \textbf{(b)} and \textbf{(c)} illustrate the effect of applying {\tt BoostConv} in its classical and robust variants, respectively, showing a substantial enhancement in the convergence toward the zero solution. 
To further quantify the performance of the two algorithms, panel \textbf{(d)} reports the decay of the residual norm $\|\mathcal{R}(u)\|_\infty$, while panel \textbf{(e)} shows the decay of the solution energy $\|u\|_{L^2}$. 
The results clearly indicate that the robust formulation provides a noticeable improvement in the convergence behaviour.}
\label{fig:burgers_combined} \end{figure}

We investigate the convergence properties of the proposed {\tt BoostConv} algorithm by considering the one–dimensional unsteady viscous Burgers' equation, which represents a prototypical nonlinear convective–diffusive model and is widely employed as a benchmark problem for testing numerical methods in fluid dynamics. The equation reads
{\color{black}
\[
\left\{
\begin{array}{
ll}
\partial_t u + u\,\partial_x u = \nu\,\partial_{xx} u,& \text{in }[0,1]\times (0,2),\\
     u(0,t)=u(1,t)=0,\\
     u(x,0)=\sin(2\pi x),
\end{array}
\right.
\]
where $u(x,t)$ denotes the velocity field and $\nu$ is the kinematic viscosity.

 This is a time-dependent problem and a popular approach to find a steady solution is to semi-discretized the differential problem in space and apply a time-marching scheme. In particular,} the spatial domain $x\in[0,1]$ is discretized using a uniform grid composed of $N_x=200$ grid points, with second–order finite–difference approximations employed for both the convective and diffusive operators. The homogeneous Dirichlet boundary conditions, $u(0,t)=u(1,t)=0$, are enforced explicitly at each time step. 
Time integration is performed using an explicit Euler scheme with time step $\Delta t = 10^{-4}$ up to a final time $T=2$. The initial condition is prescribed as a smooth sinusoidal profile, $u(x,0)=\sin(2\pi x)$, which subsequently evolves under the combined action of nonlinear advection and viscous dissipation.

Three numerical strategies are considered: the baseline explicit Euler time--marching scheme, the classical {\tt BoostConv} acceleration~\cite{CITRO2017234} with $N=5$, and the robust {\tt BoostConv} variant proposed in the present work (Algorithm~\ref{alg:stabilizedBoostConv}) with $N=5$ and $\tau=10^{-10}$. 
{\color{black}
The Euler scheme can be seen as~\eqref{eq:basic_iteration} with $B=\Delta tI$ and the iteration index $k$ can be interpreted as the time step index.  
Even though {\tt BoostConv} directly acts on the right--hand side of the semi--discrete system, without modifying the underlying time integrator (cf.~\eqref{eq:iteration2}), its iteration index $k$ loses any connection with time evolution and simply represents the current iteration of our iterative method. 
}

The results of this comparison are reported in \cref{fig:burgers_combined}. 
Panel \textbf{(a)} shows the spatio--temporal evolution of the velocity field obtained using the standard explicit Euler scheme. 
As expected, the solution relaxes progressively toward the trivial steady state $u(x,t)=0$, with the dynamics characterized by nonlinear steepening followed by viscous smoothing. 
Panels \textbf{(b)} and \textbf{(c)} illustrate the effect of applying {\tt BoostConv} in its classical and robust variant, respectively. 
In both cases the convergence toward the equilibrium state is significantly accelerated with respect to the baseline Euler integration, with the robust formulation exhibiting the most rapid stabilization of the solution.

This qualitative behavior is further quantified by the convergence histories reported in panels \textbf{(d)} and \textbf{(e)}. 
Panel \textbf{(d)} shows the evolution of the residual norm $\|\mathcal{R}(u)\|_\infty$, where $\mathcal{R}(u)$ denotes the semi--discrete Burgers operator. 
The standard explicit Euler method exhibits a slow and nearly exponential decay of the residual, reflecting the stiffness induced by the diffusive term and the unfavorable spectral properties of the linearized operator. 
The classical {\tt BoostConv} algorithm substantially accelerates the decay, while the robust variant further improves the convergence rate, allowing the residual to reach the prescribed tolerance significantly earlier.

A similar trend is observed for the solution energy, quantified by the $L^2$-norm $\|u\|_{L^2}$ and reported in panel \textbf{(e)}. 
While the standard scheme displays a gradual decay of the energy, both accelerated strategies induce a much faster relaxation toward the zero solution, with the robust formulation providing the most effective damping of the slowest modes of the system.

These observations are consistent with the theoretical considerations discussed in the previous sections. 
The convergence rate of the explicit Euler scheme is governed by the spectral distribution of the Jacobian of the Burgers operator, whose eigenvalues cluster near the imaginary axis, resulting in slow decay rates for certain modes. 
By contrast, the {\tt BoostConv} methodology acts directly on the spectral content of the iteration through residual recombination, effectively clustering the dominant eigenvalues and suppressing slowly converging components. 
The robust formulation further improves this mechanism, leading to a more regular and efficient convergence behavior without altering the underlying time--integration scheme.

\subsection{Stabilisation of Navier--Stokes equations}

% commentata altrimenti non compila. pero' la figura e' ok..

\begin{figure} 
\includegraphics[width=\textwidth]{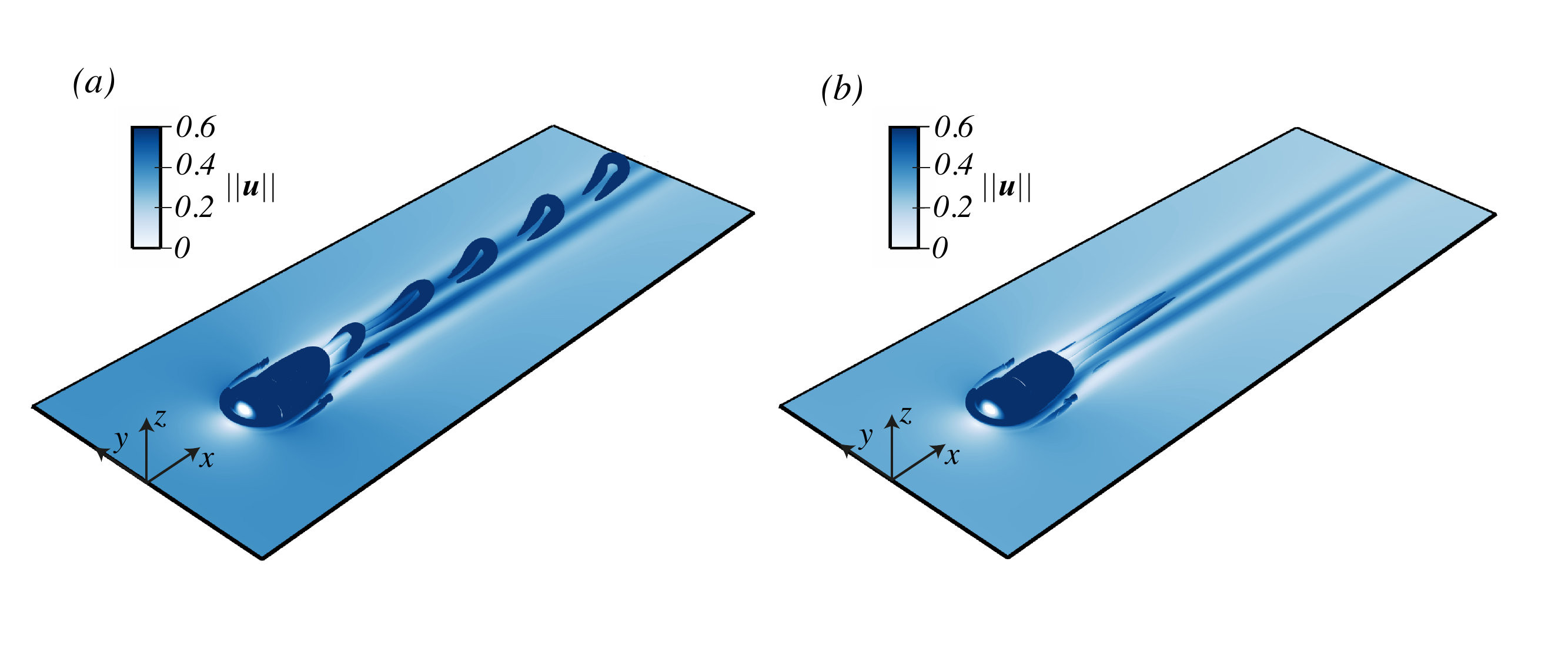}
\includegraphics[width=\textwidth]{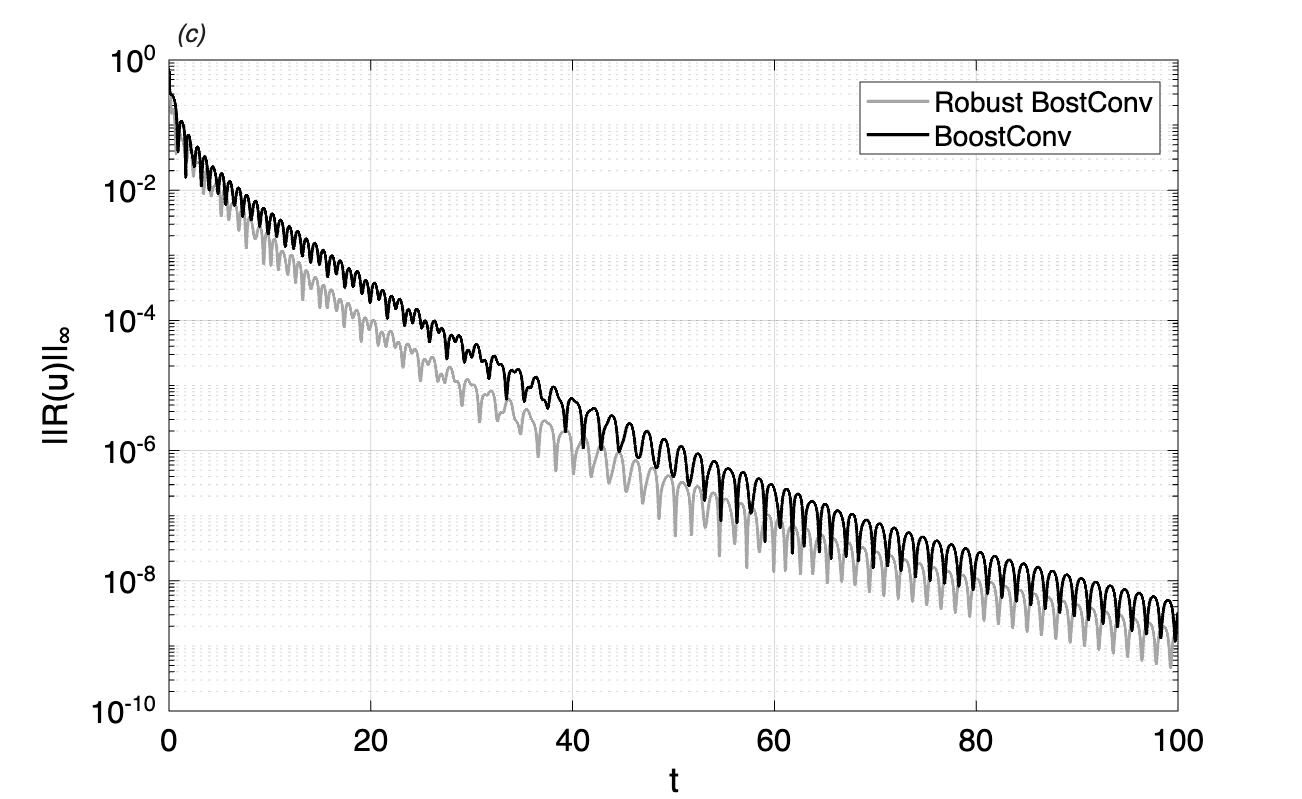}
\caption{Comparison between an unstable flow realization and the corresponding base flow, together with the stabilization behaviour of two algorithms. 
Panel \textbf{(a)} shows an instantaneous velocity field associated with an unstable solution, characterized by amplified perturbations and complex spatial structures. 
Panel \textbf{(b)} displays the base flow, obtained as the converged steady solution of the incompressible Navier--Stokes equations. 
Panel \textbf{(c)} illustrates the evolution of the stabilization process, comparing the behaviour of the original algorithm with that of the proposed robust version, highlighting the improved convergence. The BoostConv parameters are $N = 15$ and $\tau = 10^{-10}$ (threshold for linear dependency detection). 
Overall, the comparison emphasizes the qualitative differences between unstable dynamics and the underlying base state, as well as the enhanced stabilization properties of the robust algorithm. Parameter setting (see \cite{citroPOFBoostconv} for further details): $Re_k=500$, $k/\delta^*=2.5$.}
\label{fig:obstacle} 
\end{figure}

We next consider a high-dimensional test case governed by the incompressible Navier--Stokes equations, with the objective of assessing the performance of the proposed {\tt BoostConv} algorithm in a realistic large-scale flow simulation. In contrast to the low-dimensional benchmark problem discussed in the previous section, the present example reflects the intrinsic complexity of fully nonlinear flow computations, characterized by a very large number of degrees of freedom, strong nonlinear coupling, and the presence of weakly damped or unstable modes that critically influence the long-time convergence properties of iterative solvers. The primary objective is therefore not only to assess the stabilizing effect of the method, but also to demonstrate its capability to compute reference steady states that are otherwise difficult to obtain using conventional time-marching approaches.

We focus on the stability properties of viscous boundary-layer flows over a flat plate in the presence of a single hemispherical roughness element embedded in a Blasius boundary layer \cite{citroPOFBoostconv}. This configuration has been extensively investigated in the literature as a canonical model for roughness-induced transition, as it captures the essential physical mechanisms responsible for the amplification of disturbances, the onset of unsteadiness, and the formation of coherent vortical structures. As such, it provides a stringent and physically relevant test case for assessing the ability of numerical methods to resolve and stabilize complex flow dynamics.

The governing equations are the incompressible Navier--Stokes equations,
\[
\partial_t \mathbf{u} + \mathbf{u}\cdot\nabla \mathbf{u} + \nabla p
- \nu \Delta \mathbf{u} = \mathbf{0}, \qquad
\nabla\cdot\mathbf{u} = 0,
\]
where $\mathbf{u}$ denotes the velocity field, $p$ the pressure, and $\nu$ the kinematic viscosity. The equations are discretized in space using the spectral--element method as implemented in the open-source solver \texttt{Nek5000} (https://nek5000.mcs.anl.gov). High-order polynomial expansions are employed within each element, leading to a semi-discrete dynamical system of very large dimension, representative of state-of-the-art high-fidelity flow simulations. Time integration is performed using the standard semi-implicit scheme provided by \texttt{Nek5000}, in which the nonlinear convective term is treated explicitly, while the viscous contribution is handled implicitly.

For the flow configuration under consideration, there exists a critical stability threshold, parametrized by the Reynolds number, beyond which the steady boundary-layer solution becomes unstable. Above this threshold, time integration of the governing equations leads to a self-sustained unsteady flow characterized by oscillatory dynamics and periodic shedding of hairpin vortices downstream of the roughness element, as illustrated in \cref{fig:obstacle}~\textbf{(a)}. This unsteady regime dominates the long-time behavior of the system and prevents direct computation of the underlying steady solution through standard time-marching techniques.

Nevertheless, for the same set of parameters, the system admits a steady base flow that is linearly unstable. This base flow represents a fixed point of the Navier--Stokes equations and plays a central role in the theoretical analysis of roughness-induced transition, yet it is not directly accessible via conventional forward time integration due to its instability. The objective of this section is to demonstrate that {\tt BoostConv} enables the computation of this unstable base flow. The structure of the resulting fixed point is shown in \cref{fig:obstacle}~\textbf{(b)}, providing direct numerical evidence of the ability of {\tt BoostConv} to stabilize and converge to unstable steady solutions of high-dimensional nonlinear problems.
Further insight is provided by \cref{fig:obstacle}~\textbf{(c)}, where the stabilization histories obtained with the classical {\tt BoostConv} algorithm and with the robust variant proposed in the present work are directly compared. 
The results clearly show that the robust formulation enhances the stabilization process, improving convergence toward the fixed point.

%%%%%%%%%%%%%%%%%%%%%%%%%%%%%%%%%%%%%%%
\section{Conclusions}\label{Conclusions}

In this work we have presented a rigorous theoretical and computational investigation of the {\tt BoostConv} algorithm, providing for the first time a comprehensive mathematical framework that clarifies its interpretation and establishes convergence guarantees. By placing {\tt BoostConv} within the theory of residual recombination and Anderson--type acceleration methods, we have shown that the algorithm can be viewed as a multisecant strategy that constructs low--rank corrections to an underlying iteration operator while preserving a nonintrusive, black--box implementation.

From a theoretical standpoint, the main contribution of this study lies in the derivation of local convergence results under mild regularity assumptions on the nonlinear operator. The proposed robust formulation ensures that the matrices involved in the least--squares recombination remain well conditioned, thereby avoiding numerical degeneracies that typically arise in multisecant schemes. The resulting convergence theorem demonstrates linear convergence in a neighborhood of a regular solution without requiring contractivity of an underlying fixed--point map, significantly extending the classical analytical setting commonly adopted for Anderson acceleration. Moreover, the analysis highlights the role of the residual subspace in approximating the inverse Jacobian through a sequence of low--rank updates, providing a clear interpretation of {\tt BoostConv} as a multi--mixing generalization of Anderson acceleration.

The numerical experiments further corroborate the theoretical findings and illustrate the versatility of the proposed approach across problems of increasing complexity. {\color{black}We started by showing that {\tt BoostConv} is able to accelerate or even retrieve the convergence of classic stationary iterative methods for the solution of linear systems of equations.} Then, 
in the one--dimensional Burgers' equation, {\tt BoostConv} effectively suppresses slowly decaying modes and dramatically accelerates convergence toward equilibrium while preserving the qualitative dynamics of the solution. More importantly, in the high--dimensional incompressible Navier--Stokes simulation, the method demonstrates its capability to stabilize nonlinear iterations and to compute unstable steady solutions that are inaccessible through conventional time--marching procedures. These results confirm that Algorithm~\ref{alg:stabilizedBoostConv} remains effective in large--scale settings characterized by non--normal operators, strongly coupled dynamics, and extremely large numbers of degrees of freedom.

A distinctive strength of {\tt BoostConv} is its minimal intrusiveness. Algorithm~\ref{alg:stabilizedBoostConv} operates solely at the algebraic level by recombining previously computed residuals, leaving the underlying discretization, linear solvers, and time--integration strategies unchanged. This property makes it particularly attractive for integration into existing high--performance simulation frameworks, where modifications to the numerical formulation are often impractical. Furthermore, the additional computational cost remains modest since the acceleration relies on a small least--squares problem and low--dimensional updates.

Beyond the specific results presented here, several avenues for future research emerge naturally from this work. On the theoretical side, extending the convergence analysis to global settings, nonlinear stability properties, and superlinear convergence regimes represents an important direction. The interpretation of the iteration matrix $B$ as a multi--mixing approximation of the inverse Jacobian also suggests potential connections with quasi--Newton and Krylov subspace methods that deserve further investigation. From an algorithmic perspective, adaptive strategies for selecting the recombination window $N$, periodic activation criteria, and line--search mechanisms may further enhance robustness in strongly nonlinear regimes. Finally, the application of {\tt BoostConv} to multiphysics problems, optimization, and large--scale inverse problems offers promising opportunities for future developments.

Overall, the results presented in this paper establish {\tt BoostConv} as a theoretically grounded and practically effective nonlinear acceleration strategy. By bridging the gap between empirical performance and rigorous analysis, this work provides a solid foundation for the systematic use of residual recombination techniques in large--scale scientific computing and opens new perspectives for the design of efficient, nonintrusive solvers for complex nonlinear systems.

\bibliographystyle{siamplain}
\bibliography{references}

\begin{thebibliography}{10}

\bibitem{anderson1965}
{\sc D.~G. Anderson}, {\em Iterative procedures for nonlinear integral
  equations}, J. ACM, 12 (1965), pp.~547--560.

\bibitem{brezinski_anderson}
{\sc C.~Brezinski, S.~Cipolla, M.~Redivo-Zaglia, and Y.~Saad}, {\em {Shanks and
  Anderson-type acceleration techniques for systems of nonlinear equations}},
  IMA J. Numer. Anal., 42 (2022), pp.~3058--3093.

\bibitem{broyden1965}
{\sc C.~G. Broyden}, {\em A class of methods for solving nonlinear simultaneous
  equations}, Math. Comp., 19 (1965), pp.~577--593.

\bibitem{citroPOFBoostconv}
{\sc V.~Citro, F.~Giannetti, P.~Luchini, and F.~Auteri}, {\em Global stability
  and sensitivity analysis of boundary-layer flows past a hemispherical
  roughness element}, Physics of Fluids, 27 (2015), p.~084110.

\bibitem{CITRO2017234}
{\sc V.~Citro, P.~Luchini, F.~Giannetti, and F.~Auteri}, {\em Efficient
  stabilization and acceleration of numerical simulation of fluid flows by
  residual recombination}, Journal of Computational Physics, 344 (2017),
  pp.~234--246, \url{https://doi.org/10.1016/j.jcp.2017.04.081}.

\bibitem{dennis_schnabel}
{\sc J.~E. Dennis and R.~B. Schnabel}, {\em Numerical Methods for Unconstrained
  Optimization and Nonlinear Equations}, SIAM, Philadelphia, 1996.

\bibitem{EvansEtAl2020}
{\sc C.~Evans, S.~Pollock, L.~G. Rebholz, and M.~Xiao}, {\em {A Proof That
  Anderson Acceleration Improves the Convergence Rate in Linearly Converging
  Fixed-Point Methods (But Not in Those Converging Quadratically)}}, SIAM
  Journal on Numerical Analysis, 58 (2020), pp.~788--810,
  \url{https://doi.org/10.1137/19M1245384}.

\bibitem{FangSaas2009}
{\sc H.-r. Fang and Y.~Saad}, {\em Two classes of multisecant methods for
  nonlinear acceleration}, Numerical Linear Algebra with Applications, 16
  (2009), pp.~197--221, \url{https://doi.org/10.1002/nla.617}.

\bibitem{Golub2013}
{\sc G.~H. Golub and C.~F. Van~Loan}, {\em Matrix computations}, Johns Hopkins
  Studies in the Mathematical Sciences, Johns Hopkins University Press,
  Baltimore, MD, fourth~ed., 2013.

\bibitem{kelley_book}
{\sc C.~T. Kelley}, {\em Iterative Methods for Linear and Nonlinear Equations},
  SIAM, Philadelphia, 1995.

\bibitem{Pasini}
{\sc M.~Lupo~Pasini}, {\em {Convergence analysis of Anderson-type acceleration
  of Richardson's iteration}}, Numerical Linear Algebra with Applications, 26
  (2019), p.~e2241, \url{https://doi.org/10.1002/nla.2241}.

\bibitem{nocedal_wright}
{\sc J.~Nocedal and S.~J. Wright}, {\em Numerical Optimization}, Springer, New
  York, 2nd~ed., 2006.

\bibitem{saad_iterative}
{\sc Y.~Saad}, {\em Iterative Methods for Sparse Linear Systems}, SIAM,
  Philadelphia, 2nd~ed., 2003.

\bibitem{saad_szyld}
{\sc V.~Simoncini and D.~B. Szyld}, {\em {Recent computational developments in
  Krylov subspace methods for linear systems}}, Numer. Linear Algebra Appl., 14
  (2007), pp.~1--59.

\bibitem{TothKelley2015}
{\sc A.~Toth and C.~T. Kelley}, {\em {Convergence Analysis for Anderson
  Acceleration}}, SIAM Journal on Numerical Analysis, 53 (2015), pp.~805--819,
  \url{https://doi.org/10.1137/130919398}.

\bibitem{WalkerNi2011}
{\sc H.~F. Walker and P.~Ni}, {\em Anderson acceleration for fixed-point
  iterations}, SIAM Journal on Numerical Analysis, 49 (2011), pp.~1715--1735,
  \url{https://doi.org/10.1137/10078356X}.

\end{thebibliography}

\end{document}